\documentclass[12pt,letterpaper]{article}

\usepackage[T1]{fontenc}
\usepackage[utf8]{inputenc}
\usepackage{lmodern}
\usepackage{amsmath,amssymb,amsthm,mathtools}
\usepackage[margin=1in]{geometry}
\usepackage{microtype}
\usepackage[numbers,sort&compress]{natbib}
\usepackage[hidelinks]{hyperref}

\allowdisplaybreaks
\numberwithin{equation}{section}
\newtheorem{theorem}{Theorem}[section]
\newtheorem{lemma}[theorem]{Lemma}

\theoremstyle{remark}

\newcommand{\R}{\mathbb{R}}
\newcommand{\cT}{\mathcal{T}}
\newcommand{\Emb}{\operatorname{Emb}}

\newcommand{\1}{\mathbf{1}}
\newcommand{\dotcuprel}{\mathbin{\dot\cup}}
\newcommand{\eps}{\varepsilon}

\title{On decomposition thresholds for odd cycles}
\author{%
Jianfeng Hou\thanks{Email: \texttt{jfhou@fzu.edu.cn}}\textsuperscript{1},
Caiyun Hu\thanks{Email: \texttt{hucaiyun.fzu@gmail.com}}\textsuperscript{1},
Hezhi Wang\thanks{Email: \texttt{wanghezhi128@gmail.com}}\textsuperscript{1}\\[1em]
{\normalsize\textsuperscript{1}Center for Discrete Mathematics, Fuzhou University, Fuzhou, China}%
}
\date{}

\makeatletter
\renewcommand{\@maketitle}{%
  \newpage
  \null
  \vskip -5.32pt%
  \begin{center}%
    \vspace*{-17.23pt}%
    {\LARGE\bfseries \@title \par}%
    \ifx\@author\@empty\else
      \vskip 1.5em%
      {\large
        \def\@makefnmark{\hbox{\@textsuperscript{\normalfont\@thefnmark}}}%
        \lineskip .5em%
        \begin{tabular}[t]{c}%
          \@author
        \end{tabular}\par}%
    \fi
    \ifx\@date\@empty\else
      \vskip 1em%
      {\large \@date}%
    \fi
  \end{center}%
  \par
  \vskip 12pt}
\makeatother

\begin{document}
\microtypesetup{activate=false}
\maketitle

\begin{abstract}
We determine the asymptotic minimum-degree threshold for decompositions into cycles of a fixed odd length. We prove that, for each odd integer $\ell\ge3$,  every $\eps>0$ and sufficiently large $n$, every  $n$-vertex  $C_\ell$-divisible graph $G$ with minimum degree 
$\delta(G)\ge (\ell/(2\ell-2)+\eps)n$ has a $C_\ell$-decomposition, where $C_{\ell}$ denotes the cycle of length $\ell$. This settles the remaining cases $\ell=5,7,\ldots,71$ and completes the determination of the thresholds for all odd cycles. Our main ingredient is a sharp fractional decomposition
theorem for weighted triangles: for every real $a\ge 1$,
every $n$-vertex graph $G$ with $\delta(G)\ge
\left(\frac12+\frac{1}{2(a+1)}\right)n$ 
admits a fractional decomposition into triangles
with edge weights $a,1,1$. 
\end{abstract}

\noindent\textbf{Keywords.} cycle; decomposition; fractional decompositions; weighted triangles.

\noindent\textbf{2020 Mathematics Subject Classification.} Primary 05C51; Secondary 05C35, 05C72.

\section{Introduction}\label{sec:introduction}

Let $F$ be a fixed graph. An \emph{$F$-decomposition} of a graph $G$ is a collection of edge-disjoint copies of $F$ whose edge sets partition $E(G)$. Clearly, such a decomposition requires $e(F)\mid e(G)$ and the greatest common divisor of the degrees of $F$ to divide the degree of every vertex of $G$. We call a graph satisfying these necessary conditions \emph{$F$-divisible}. 

A fundamental problem in graph decomposition theory is
to determine when the necessary divisibility conditions
are also sufficient.  By a classical theorem of Wilson~\cite{Wilson1975},
every sufficiently large $F$-divisible complete graph
admits an $F$-decomposition. For general graphs, however, divisibility alone is
far from sufficient, and determining which graphs
admit an $F$-decomposition is considerably more difficult.
A natural approach is to impose a minimum-degree
condition on the host graph.

For a fixed graph $F$ with at least one edge, its \emph{decomposition threshold} $\delta_F$ is the infimum of all $d\in[0,1]$ such that, for  $\eps>0$ and a sufficiently large integer $n$, every $n$-vertex  $F$-divisible graph $G$  with $\delta(G)\ge(d+\eps)n$ has an $F$-decomposition. The systematic study of these thresholds was initiated by Barber, K\"uhn, Lo and Osthus~\cite{barber2016}, who developed the iterative absorption method for decomposing dense graphs. Further general results were obtained by Glock, K\"uhn, Lo, Montgomery and Osthus~\cite{glock2019}. 
For further related results on graph decompositions, see, for example,~\cite{bryant2014,delcourt2021,glock2023,keevashdesigns}.
Even for simple $F$, however, determining $\delta_F$ can be difficult.

Cycle decompositions exhibit a notable distinction between even and odd lengths. For $\ell\ge 3$, let $C_{\ell}$ be a cycle of length $\ell$.  For even cycles, Barber, K\"uhn, Lo and Osthus~\cite{barber2016} proved that
\[
 \delta_{C_4}=\frac23
 \qquad\text{and}\qquad
 \delta_{C_\ell}=\frac12\quad\text{for every even }\ell\ge6.
\]
Taylor~\cite{taylor2019} subsequently obtained exact results in the even-cycle setting. The corresponding thresholds for odd cycles are governed by a different obstruction. Note that every cycle crosses a given cut an even number of times. In particular, an odd cycle of length $\ell$ contains at most $\ell-1$ crossing edges, which means that a $C_{\ell}$-decomposition must contain enough internal edges to accommodate the crossing edges. As observed in~\cite{barber2016,bryant2026}, this yields the lower bound
\begin{equation}\label{eq:odd-cycle-lower-intro}
 \delta_{C_\ell}\ge\frac12+\frac1{2\ell-2}
 \qquad\text{for every odd }\ell\ge3.
\end{equation}

Progress towards the matching upper bound came in several stages. The case $\ell=3$ is the celebrated Nash--Williams Conjecture~\cite{nashwilliams1970}, which predicts the threshold $3/4$. For longer odd cycles, Joos and K\"uhn~\cite{joos2022} showed that $\delta_{C_\ell}$ approaches $1/2$ as $\ell$ tends to infinity through odd integers. Bryant, Dukes, Horsley, Maenhaut and Montgomery~\cite{bryant2026} subsequently proved that for every  odd $\ell\ge5$, 
\[
 \delta_{C_\ell}\le\frac12+\frac1{2\ell-4}.
\]

Two recent breakthroughs brought the problem close to completion. In 2026, Delcourt and Postle~\cite{delcourt2026} considered triangle decompositions and resolved Nash--Williams' conjecture. 
\begin{theorem}[Delcourt--Postle~\cite{delcourt2026}]\label{thm:delcourt-postle-intro}
For a sufficiently large integer $n$,  every $n$-vertex triangle-divisible graph $G$  with minimum degree $\delta(G)\ge 3n/4$ 
admits a triangle decomposition.
\end{theorem}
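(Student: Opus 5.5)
This is the Delcourt--Postle resolution of the Nash--Williams conjecture, cited here as a black box. A plan for proving it would follow the now-standard reduction of exact decomposition results to fractional ones, combined with absorption.

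\textbf{Step 1: fractional decomposition.} The first step is to show that every $n$-vertex graph with $\delta(G)\ge (3/4-\eta)n$ has a fractional $K_3$-decomposition for some small $\eta>0$. The natural plan is to start from the uniform weighting on triangles, which is roughly balanced because every edge lies in at least $n/2$ triangles. One then corrects the edge-load discrepancies by local gadgets, namely signed combinations of triangles inside small cliques or octahedra that shift weight between edges while keeping all weights nonnegative. Dross's $9/10$ and the later $7/9$-type arguments follow this route. Pushing it to the sharp $3/4$ requires showing that the relevant LP is feasible, and by LP duality this is equivalent to ruling out an edge weighting $w$ with $w(T)\ge 0$ for every triangle $T$ but $w(E(G))<0$. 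I would try to analyze such a dual certificate via the extremal constructions, which are blow-ups of $C_4$-like structures, and use stability to show that near $3/4$ a negative certificate forces $G$ to be close to these extremal graphs. Those graphs fail the degree condition.

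\textbf{Step 2: fractional to approximate to exact.} The second step applies the Barber--K\"uhn--Lo--Osthus iterative absorption framework. That framework converts a fractional threshold $\delta^*_{K_3}$ into $\delta_{K_3}\le\max\{\delta^*_{K_3},1/2\}$, or more precisely into an asymptotic bound $\delta(G)\ge(\delta^*+\varepsilon)n$. Concretely, one fixes a vortex $V(G)=U_0\supseteq U_1\supseteq\dots\supseteq U_t$ with $|U_t|$ bounded. One uses R\"odl nibble / Haxell--R\"odl with the fractional decomposition to cover almost all edges of $G[U_{i}]\setminus G[U_{i+1}]$, and uses the minimum degree to cover leftover edges with triangles reaching into $U_{i+1}$. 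Finally, one absorbs the bounded remainder in $U_t$ with absorbers reserved in advance. The absorbers exist for triangle-divisible remainders at degree above $1/2+\varepsilon$.

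\textbf{Step 3: removing the $\varepsilon$.} The third step removes $\varepsilon$ to get the exact bound $3n/4$. The plan is a stability analysis. If $G$ is far from the extremal examples, the fractional threshold holds with room to spare, so Steps 1--2 apply with slack. If $G$ is close to an extremal example, the structure is explicit: vertex classes with prescribed densities. One then decomposes directly by first fixing the parity and cut constraints with a small set of hand-placed triangles and then applying a bipartite-like nibble together with absorption.

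The main obstacle is Step 1 at the sharp constant. Fractional decomposition at exactly $3/4$ is where all previous work stalled, and the dual certificate analysis there requires a genuinely new idea. My first attempt would be a discharging/weight-shifting argument over $K_4$'s and $K_5$'s, combined with a Farkas-certificate stability argument. The exact step in Step 3 is the second delicate point, since extremal graphs are tight in both the degree and the divisibility constraints.
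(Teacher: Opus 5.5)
\paragraph{Main gap: Step~1 is false as stated.} The paper does not prove this statement. It is imported from Delcourt--Postle, and the only part of their argument visible here is the fractional one: Farkas duality plus the discharging estimate of Lemma~\ref{lem:dp-discharging}. Measured against that, your Step~1 is not just incomplete but false. Graphs with $\delta(G)\ge(3/4-\eta)n$ need not have a fractional $K_3$-decomposition. Take $a=1$ in the sharpness construction of Theorem~\ref{thm:weighted-triangle}: $K_{m,m}$ with an $r_m$-regular graph inside each side, where $m/2-2\le r_m<m/2$. Then $n=2m$ and $\delta=3n/4-O(1)$, yet the cut inequality $e_{\mathrm{cross}}\le 2e_{\mathrm{int}}$ of \eqref{eq:weighted-cut-global} fails. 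Your clique blow-up of $C_4$ is the special case in which each side is two disjoint cliques. So the claim that the extremal graphs \emph{fail the degree condition} is wrong, and the stability heuristic built on it collapses.

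\paragraph{The missing idea: the sharp fractional step.} There is no slack: the fractional statement must be proved exactly at $3n/4$, and that is precisely what your plan leaves as \emph{a genuinely new idea}. What the paper shows of it is a discharging on the dual certificate, not gadget corrections of the uniform weighting. For a nonzero untied triangle-nonnegative $\phi$, each positive edge $e$ pays the negative edges $f$ sharing a triangle with it, in proportion to the demand $D(e,f)$ of \eqref{eq:demand}. Lemma~\ref{lem:dp-discharging} with $\varepsilon=0$ then gives $Q(f)\ge Q_0(f)\ge 0$ for every negative edge, hence $\sum_e\phi(e)\ge 0$. This is Lemma~\ref{lem:mass-estimate} and Theorem~\ref{thm:weighted-triangle} with $a=1$, $\rho=0$. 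For $\varepsilon>0$ the same lemma only gives $Q_0(f)\ge-4\varepsilon|\phi(f)|$, consistent with the failure below $3/4$.

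\paragraph{Consequences for Steps~2 and~3.} Because there is no slack, Step~2 yields only $\delta(G)\ge(3/4+\varepsilon)n$. Iterative absorption needs the fractional property in the vortex subgraphs $G[U_i]$, whose minimum-degree ratio drops by $o(1)$. The exact theorem therefore rests entirely on Step~3, which is only a sketch.
\begin{itemize}
\item Its premise that graphs far from the extremal ones have fractional decompositions \emph{with room to spare}, robustly in every subgraph the vortex produces, is a separate stability theorem. It does not follow from Step~1 even once corrected.
\item The near-extremal case is not an explicit configuration. Near-extremal graphs are almost complete balanced bipartite graphs with \emph{arbitrary} internal graphs of relative density about $1/2$.
\item When $\delta(G)=3n/4$, the inequality $e_{\mathrm{cross}}\le 2e_{\mathrm{int}}$ can hold with equality (complete balanced cut, $n/4$-regular sides, $8\mid n$). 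This forces every triangle of a decomposition to use exactly one internal edge. The problem becomes a rigid design-type task, coupling matching decompositions of the two internal graphs across the cut. Hand-placed triangles plus a bipartite nibble do not obviously solve it.
\end{itemize}
As it stands, the proposal is a roadmap whose one precise claim is false.
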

Independently, Granet and Horsley~\cite{granet2026} proved that for  every odd $\ell \ge 73$, 
\[
 \delta_{C_\ell}=\frac12+\frac1{2\ell-2}. 
\]
The proof of Granet and Horsley~\cite{granet2026} relies on quantitative stability estimates for graphs close to the extremal bipartite construction. In particular, their bounds on the imbalance of the bipartition and the number of exceptional vertices require the cycle length to be sufficiently large in order to verify the Hall-type inequalities. Consequently, their argument establishes the optimal decomposition threshold only for odd cycles of length at least $73$, leaving the shorter odd cycles unresolved. In this paper, we resolve these remaining cases and thereby determine the decomposition thresholds for all odd cycles.

\begin{theorem}\label{thm:odd-cycle-threshold}
For every odd integer $\ell\ge3$, we have 
\[
 \delta_{C_\ell}=\frac12+\frac1{2\ell-2}.
\]
\end{theorem}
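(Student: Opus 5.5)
The lower bound is \eqref{eq:odd-cycle-lower-intro}, so the plan only needs the upper bound $\delta_{C_\ell}\le\frac12+\frac1{2\ell-2}$. For $\ell=3$ this is Theorem~\ref{thm:delcourt-postle-intro}, since $\frac12+\frac14=\frac34$. For $\ell\ge 73$ it is the result of Granet and Horsley~\cite{granet2026}. So the remaining work is the case of odd $\ell$ with $5\le\ell\le71$, although the argument below is uniform in odd $\ell\ge5$.

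\textbf{Reduction to a fractional threshold.} I will use the iterative absorption framework of Barber, K\"uhn, Lo and Osthus~\cite{barber2016} in the cycle form used by Joos--K\"uhn and Bryant et al. This framework says that $\delta_{C_\ell}\le\max\{\delta^{*}_{C_\ell},\frac12\}$ for odd $\ell\ge5$. Here $\delta^{*}_{C_\ell}$ is the fractional $C_\ell$-decomposition threshold. The extra $\frac12$ comes from the absorber and cover-down steps, which for long odd cycles only need $\delta\ge(1/2+\eps)n$. It therefore suffices to show that every $n$-vertex graph with $\delta(G)\ge(\frac12+\frac1{2\ell-2}+\eps)n$ has a fractional $C_\ell$-decomposition. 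One also needs the robust version, stable under deleting a sparse subgraph, but that follows from the same argument with $\eps/2$ slack.

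\textbf{From weighted triangles to cycles.} I take $a=\ell-2$. Then
\[\frac12+\frac1{2(a+1)}=\frac12+\frac1{2\ell-2},\]
so the weighted-triangle theorem from the abstract gives a fractional decomposition of $G$ into triangles $xyz$ whose three edges carry weights $a,1,1$. The plan is to convert each such weighted triangle into a fractional combination of copies of $C_\ell$ with the same edge-weight profile, up to a small error. Take the heavy edge $xy$ with weight $a=\ell-2$ and the two unit edges $yz$, $zx$. Replace the heavy edge by an average over paths of length $\ell-2$ from $x$ to $y$; together with $y z x$ these close up to copies of $C_\ell$.

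Doing this naively would shift weight from the edge $xy$ onto many other edges. The fix is the standard one. By minimum degree above $n/2$, the number of $x$--$y$ paths of length $\ell-2$ is $\Theta(n^{\ell-3})$ and roughly uniform. We fractionally spread the weight of the heavy edge over these paths, in the spirit of Barber--K\"uhn--Lo--Osthus--Taylor "edge-gadget" arguments. This makes the induced load on every edge a nearly uniform $O(1/n)$-perturbation. That perturbation is then absorbed by the $\eps n$ slack, using the fact that graphs with $\delta\ge(1/2+\eps)n$ can fractionally cancel small nearly-uniform weight functions via $C_\ell$'s.

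Concretely, I would first shrink the weighted-triangle decomposition to cover only $G$ minus a reserved random sparse subgraph. I would then use the reserved graph to correct the error weight with signed $C_\ell$ combinations, keeping all weights nonnegative.

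\textbf{Main obstacle.} The heart of the argument is the weighted-triangle fractional theorem, which is where sharpness lives. Assuming it as stated in the abstract, the delicate step here is the conversion. One must check that cycles built from a weighted triangle plus a path do not overload edges at the extremal density. The choice $a=\ell-2$ matches exactly the cut count: an odd cycle has at most $\ell-1$ crossing edges, and the heavy edge must carry the weight of $\ell-2$ of them. I would verify the error correction via a Hall/LP-duality check on the reserved random graph.
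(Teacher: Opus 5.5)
\textbf{Gap in the conversion step.} Your reduction and your choice $a=\ell-2$ are right. The step that fails is turning a fractional $T_{\ell-2,1,1}$-decomposition into copies of $C_\ell$ by spreading the heavy weight of each $xy$ uniformly over $x$--$y$ paths of length $\ell-2$. This is not an $O(1/n)$ perturbation of the edge loads. Write $X_h=\sum_T x_{T,h}$ for the total coefficient with $h$ as heavy edge; it averages $1/\ell$ over the edges. An edge $uv$ loses its whole heavy load $aX_{uv}$. It gains an amount of order $1$ governed by the heavy loads $X_h$ of the edges at $u$ and $v$ (through first and last path edges) and by the global heavy mass (through interior path edges). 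These quantities are unrelated, so many edges end up with load $1\pm\Theta(1)$, and no sparse reserved graph can repair errors of that size.

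The defect is also structural, not just local. Take the near-extremal graph: halves $A,B$, complete between them, internal degree about $n/(2\ell-2)$. Here almost all of the fractional $T_a$-weight sits on triangles whose heavy edge crosses the cut. A uniformly random $A$--$B$ path of length $\ell-2$ contains at least two internal edges with probability bounded away from $0$. The resulting cycles therefore use noticeably more than one internal edge on average. But the cut count $e_{\mathrm{cross}}\approx(\ell-1)e_{\mathrm{int}}$ forces almost all cycles in a near-perfect packing to use exactly one. So uniform spreading cannot reach the sharp threshold, and choosing paths that respect this structure is essentially the original fractional $C_\ell$-problem again.

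\textbf{The missing ingredient.} What you need is the condensation theorem of Bryant et al.\ (Theorem~\ref{thm:condensation}). Colour $C_\ell$ with $v_\ell$ in class $3$ and classes $1,2$ alternating along $v_1\cdots v_{\ell-1}$. This shows that $T_{\ell-2,1,1}$ is a condensation of $C_\ell$. Hence a fractional $T_{\ell-2,1,1}$-decomposition directly yields an $\eta$-approximate $C_\ell$-decomposition, with no fractional $C_\ell$-decomposition, path-spreading or error correction needed. The paper then finishes with the absorption result in the form of Theorem~\ref{thm:absorption}. That result only requires $\delta^\eta_{C_\ell}\le\frac12+\frac1{2\ell-2}$ for every $\eta>0$, a weaker intermediate statement than the fractional $C_\ell$-threshold you aimed for. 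The resulting argument is uniform in odd $\ell\ge3$, so the case split via Delcourt--Postle and Granet--Horsley is also unnecessary.
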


Our proof uses the condensation method of Bryant
et al.~\cite{bryant2026}, reducing odd-cycle
decompositions to fractional weighted-triangle
decompositions. The main new ingredient is a
rescaling of positive dual edge weights, which
allows us to apply the discharging estimate of
Delcourt and Postle~\cite{delcourt2026}
and obtain the sharp minimum degree bound.

The remainder of the paper is organised as follows.
Section~\ref{sec:notation} collects notation and the
fractional decomposition framework.
Section~\ref{sec:weighted-triangles} proves a sharp
fractional weighted triangle decomposition theorem.
Section~\ref{sec:odd-cycles} proves Theorem \ref{thm:odd-cycle-threshold}.

\section{Preliminaries}\label{sec:notation}

In this section, we introduce some notation and definitions.
Let $\R$ denote the set of real numbers and
$\R_{>0}$ the set of positive real numbers.
For a positive integer $k$, let $[k]:=\{1,\ldots,k\}$. For a finite set $S$, the space $\R^S$ consists of real vectors $x=(x_s)_{s\in S}$, and $\R_{\ge0}^S$ consists of those with $x_s\ge0$ for every $s\in S$.

For a graph $G$, let $V(G)$ and $E(G)$ be its vertex and edge sets, with \emph{order} $v(G):=|V(G)|$ and \emph{size} $e(G):=|E(G)|$.
For $v\in V(G)$, the \emph{degree} $d_G(v)$ of $v$ is the number of  edges incident with $v$. Let $\delta(G):=\min_{v\in V(G)}d_G(v)$ be the \emph{minimum degree} of $G$. A \emph{cut} $(A,B)$ of $G$ is a partition $V(G)=A\dotcuprel B$. An edge is a \emph{cross edge} of $(A,B)$ if its endpoints lie in different parts, and an \emph{internal edge} of $(A,B)$ if they lie in the same part. A graph is \emph{$r$-regular} if every vertex has degree $r$.  For a fixed graph $F$ with at least one edge, put $\gcd(F):=\gcd\{d_F(v):v\in V(F)\}$.
Recall that a graph $G$ is \emph{$F$-divisible} if $e(F)\mid e(G)$ and $\gcd(F)\mid d_G(v)$ for every $v\in V(G)$.

Fractional decompositions provide a natural linear
programming relaxation of graph decomposition problems
and play an important role in their study.
Haxell and R\"odl~\cite{haxell2001} established
a connection between fractional and approximate
decompositions.
This motivates the study of minimum-degree conditions
for fractional decompositions.
For cliques, such conditions were obtained by
Barber, K\"uhn, Lo, Montgomery and
Osthus~\cite{barber2017}, and subsequently improved
by Montgomery~\cite{montgomery2019}. We recall only the notions needed here.

A \emph{weighted graph} $W$ consists of an \emph{underlying graph} $F$ and a positive \emph{edge-weight function}
\[
 w_W:E(F)\longrightarrow \R_{>0}.
\]
An \emph{embedding} of $W$ into $G$ is an injective map $f:V(F)\to V(G)$ with $f(u)f(v)\in E(G)$ whenever $uv\in E(F)$. 
It defines a \emph{weighted copy} of $W$ in which each edge $f(uv):=f(u)f(v)$ has weight $w_W(uv)$. Let $f(E(F))=\{f(u)f(v): uv\in E(F)\}$. 
Write $\Emb(W,G)$ for the set of these embeddings and, for $e\in E(G)$, put
\[
 \Emb_e(W,G):=\{f\in\Emb(W,G):e\in f(E(F))\}.
\]
For $f\in\Emb_e(W,G)$, write $f^{-1}(e)$ for the unique edge $uv\in E(F)$ with $f(uv)=e$.

A \emph{fractional $W$-decomposition} of $G$ assigns each $f\in\Emb(W,G)$ a coefficient $x_f\ge0$ with
\[
 \sum_{f\in\Emb_e(W,G)}x_f\,w_W\bigl(f^{-1}(e)\bigr)=1
 \qquad\text{for every }e\in E(G).
\]
The copies may share edges, and coefficients of embeddings defining the same weighted copy may be added.
A \emph{fractional $F$-decomposition} is the special case in which every edge of $F$ has weight one.

Given a real number $a\ge1$, let $T_a=T_{a,1,1}$ denote a triangle with edge weights $a,1,1$; any of the three edges in a copy may carry weight $a$.  For a graph $G$, let $\cT(G)$ be the set of triangles in $G$ and put
\[
 \mathcal{I}(G):=\{(T,h):T\in\cT(G),\ h\in E(T)\}.
\]
Each pair $(T,h)\in\mathcal{I}(G)$ represents a copy of $T_a$ with edge-weight vector $w_{T,h}\in\R^{E(G)}$ given by
\begin{equation}\label{eq:weighted-triangle-vector}
 w_{T,h}(e):=
 \begin{cases}
 a,&e=h,\\
 1,&e\in E(T)\setminus\{h\},\\
 0,&e\notin E(T).
 \end{cases}
\end{equation} 
Note that $x\in \R_{\ge0}^{\mathcal{I}(G)}$ form a fractional $T_a$-decomposition exactly when
\begin{equation}\label{eq:fractional-Ta}
 \sum_{(T,h)\in\mathcal{I}(G)}x_{T,h}w_{T,h}(e)=1
 \qquad\text{for every }e\in E(G).
\end{equation}

Following the duality framework for fractional triangle
decompositions, we consider a \emph{signed edge weighting}
of $G$, that is, a function
\[
\phi:E(G)\longrightarrow\mathbb R.
\]
Such weightings play a central role in the work of
Delcourt and Postle~\cite{delcourt2026}.
We define the sets of positively and negatively weighted
edges by
\[
E_\phi^+:=\{e\in E(G):\phi(e)>0\},
\qquad
E_\phi^-:=\{e\in E(G):\phi(e)<0\},
\]
respectively.
Its \emph{positive mass} and \emph{negative mass} are
\[
 P(\phi):=\sum_{e\in E(G)}\max\{\phi(e),0\},
 \qquad
 N(\phi):=\sum_{e\in E(G)}\max\{-\phi(e),0\}.
\]
Clearly, 
\begin{equation}\label{eq:total-mass}
 \sum_{e\in E(G)}\phi(e)=P(\phi)-N(\phi).
\end{equation}
We call $\phi$ 
\emph{triangle-nonnegative} if
\[
 \sum_{e\in E(T)}\phi(e)\ge 0
 \qquad\text{for every }T\in\cT(G).
\]
Delcourt and Postle call such a weighting \emph{triangle-covered} \cite[Definition~4.3]{delcourt2026}.
Following their terminology \cite[Definition~4.6]{delcourt2026}, we call $\phi$ \emph{nonzero} if $\phi(e)\ne0$ for every $e\in E(G)$, and \emph{untied} if its edge weights are pairwise distinct.

We conclude this section by recalling the following
classical form of Farkas' lemma, which is a key
ingredient in our proof.

\begin{lemma}[Farkas' lemma {\cite[Chapter~7]{schrijver1986}}]
\label{thm:farkas}
Let $A\in\mathbb R^{m\times n}$ and
$b\in\mathbb R^m$.
Then exactly one of the following holds:
\begin{enumerate}
    \item There exists $x\in\mathbb R_{\ge0}^n$
    such that $Ax=b$.
    \item There exists $y\in\mathbb R^m$
    such that $A^{\mathsf T}y\ge0$ and
    $b^{\mathsf T}y<0$.
\end{enumerate}
\end{lemma}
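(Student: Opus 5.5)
The statement is the classical Farkas lemma, and I would prove it with the standard separation argument for finitely generated cones. First I show that the two alternatives cannot both hold. Suppose $x\in\R_{\ge0}^n$ satisfies $Ax=b$ and $y\in\R^m$ satisfies $A^{\mathsf T}y\ge0$. Then
\[
b^{\mathsf T}y=(Ax)^{\mathsf T}y=x^{\mathsf T}(A^{\mathsf T}y)\ge0,
\]
since it is a sum of products of nonnegative numbers. This contradicts $b^{\mathsf T}y<0$.

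For the remaining direction, assume alternative 1 fails. Let $C:=\{Ax:x\in\R_{\ge0}^n\}$, the convex cone generated by the columns $a_1,\ldots,a_n$ of $A$, so that $b\notin C$.

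The key step, which I expect to be the main obstacle, is to show that $C$ is closed; I will do this via Carathéodory's theorem for cones. Any nonnegative combination $\sum_i x_ia_i$ whose support columns are linearly dependent can be rewritten with smaller support. Take a dependence $\sum_i\lambda_ia_i=0$ with some $\lambda_i>0$, and subtract $t\lambda$ from $x$ for the largest $t$ that keeps all coordinates nonnegative; this kills at least one coordinate. Iterating, every point of $C$ is a nonnegative combination of a linearly independent subset $S$ of the columns. Hence $C=\bigcup_S C_S$, where $S$ ranges over the finitely many linearly independent subsets and $C_S$ is the cone generated by $S$. Each $C_S$ is the image of the closed orthant $\R_{\ge0}^S$ under the injective linear map $A_S$. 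An injective linear map is a homeomorphism onto its image, a closed subspace, so each $C_S$ is closed. A finite union of closed sets is closed, so $C$ is closed.

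With $C$ closed, convex and nonempty (it contains $0$), let $p\in C$ be the nearest point to $b$, which exists by closedness and is unique by convexity, and put $y:=p-b\neq0$.

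To finish, I use the projection inequality $(z-p)^{\mathsf T}(p-b)\ge0$ for all $z\in C$. Since $C$ is a cone, I may take $z=0$ and $z=2p$, which gives $p^{\mathsf T}y=0$. Hence $z^{\mathsf T}y\ge0$ for every $z\in C$; applying this to each column $a_i\in C$ yields $A^{\mathsf T}y\ge0$. Finally,
\[
b^{\mathsf T}y=(p-y)^{\mathsf T}y=-\|y\|^2<0,
\]
so alternative 2 holds.
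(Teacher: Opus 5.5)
Your proof is correct. Incompatibility follows from $b^{\mathsf T}y=x^{\mathsf T}(A^{\mathsf T}y)\ge0$. The Carath\'eodory reduction shows that $C$ is a finite union of images of closed orthants under injective linear maps, so $C$ is closed. Taking $z=0$ and $z=2p$ in the projection inequality forces $p^{\mathsf T}y=0$, so $y=p-b$ satisfies $A^{\mathsf T}y\ge0$ and $b^{\mathsf T}y=-\|y\|^2<0$.

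The paper gives no proof of its own and only cites Schrijver, Chapter~7. There Farkas' lemma is a corollary of the Fundamental Theorem of Linear Inequalities, which is proved by a finite pivoting procedure on bases of columns. One writes $b$ in the current basis; if some coefficient is negative, one takes the hyperplane spanned by the other basis vectors, and if some column lies strictly on the wrong side, exchanges it in. Choosing smallest indices throughout rules out cycling; this is essentially the simplex method with Bland's rule.

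Your geometric route (closedness plus nearest-point projection) is shorter and more conceptual. However, it depends on the topology and completeness of $\R^m$, which is exactly why you need the separate closedness step. You also use the variational inequality $(z-p)^{\mathsf T}(p-b)\ge0$ without proof; it follows by expanding $\|p+t(z-p)-b\|^2\ge\|p-b\|^2$ for $t\in(0,1]$ and letting $t\to0$.

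Schrijver's algorithmic route needs no topology, so it works verbatim over any ordered field; for example, rational data give a rational certificate $y$. It also yields a sharper conclusion. Either $b$ is a nonnegative combination of linearly independent columns, or $y$ can be chosen so that the hyperplane $\{z:y^{\mathsf T}z=0\}$ contains $t-1$ linearly independent columns, where $t$ is the rank of the columns of $A$ together with $b$. For the paper's application (real $A$ and $b=\1$), either proof suffices.
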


\section{Fractional decompositions of weighted triangles}\label{sec:weighted-triangles}

Recall that, for $a\ge1$, $T_a$ is the triangle with edge weights $a,1,1$. A key ingredient in the approach of Bryant et al.~\cite{bryant2026} to odd-cycle decompositions is the following sufficient condition for fractional decompositions
into weighted triangles.

\begin{theorem}[Bryant et al.~{\cite[Theorem~1.2]{bryant2026}}]
\label{thm:bryant-weighted}
Let $a\ge 1$ be a real number, and let $G$ be an $n$-vertex graph. If
\[
\delta(G)\ge
\left(\frac12+\frac{1}{2a}\right)n,
\]
then $G$ admits a fractional $T_a$-decomposition.
\end{theorem}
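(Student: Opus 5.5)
We argue by linear programming duality, in the form of Lemma~\ref{thm:farkas}. Take the matrix $A\in\R^{E(G)\times\mathcal{I}(G)}$ whose columns are the vectors $w_{T,h}$ from \eqref{eq:weighted-triangle-vector}, and take $b=\1\in\R^{E(G)}$. Then a fractional $T_a$-decomposition is exactly a solution $x\ge0$ of \eqref{eq:fractional-Ta}. If none exists, Lemma~\ref{thm:farkas} yields a signed edge weighting $\phi:E(G)\to\R$ with two properties:
\[
 a\phi(h)+\phi(f)+\phi(g)\ge0\quad\text{for every triangle }T=\{h,f,g\}\in\cT(G)\text{ and every choice of }h\in E(T),
\]
\[
 \textstyle\sum_{e\in E(G)}\phi(e)<0 .
\]
By \eqref{eq:total-mass}, the second property says $N(\phi)>P(\phi)$. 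So the whole task is to show that, under $\delta(G)\ge(\frac12+\frac1{2a})n$, every weighting satisfying the first ("$T_a$-nonnegativity") constraints has $N(\phi)\le P(\phi)$. Averaging the three constraints of a triangle shows that $\phi$ is in particular triangle-nonnegative. However, the constraint we really exploit is the one in which the heavy weight $a$ sits on a negative edge.

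The charging step is as follows. Fix a negative edge $e=uv\in E^-_\phi$. Any two vertices have at least $2\delta(G)-n\ge n/a$ common neighbours. For each common neighbour $w$, the constraint with $h=e$ gives
\[
 \phi^+(uw)+\phi^+(vw)\;\ge\;\phi(uw)+\phi(vw)\;\ge\;a|\phi(e)|,
\]
where $\phi^+:=\max\{\phi,0\}$. Summing over these $w$ gives
\[
 \sum_{w\in N(u)\cap N(v)}\bigl(\phi^+(uw)+\phi^+(vw)\bigr)\ge |\phi(e)|\,n .
\]
We then sum over all $e\in E^-_\phi$ and compare with $n\,P(\phi)$. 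A positive edge $f=uw$ is charged once for each negative edge $uv$ with $v\in N(w)$, and once for each negative edge $wv$ with $v\in N(u)$.

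The main obstacle is bounding this multiplicity by $n$ rather than the trivial $2n$. A crude count only yields $N(\phi)\le 2P(\phi)$, and the inequality must be tight, since the extremal bipartite-type examples are sharp. To overcome this, I would not charge the full amount $a|\phi(e)|$ to every triangle. Instead, I would use a discharging rule based on an ordering of edge weights. After a small perturbation we may assume that $\phi$ is nonzero and untied, in the sense of Section~\ref{sec:notation}; this costs nothing by continuity. For each triangle, put the heavy coefficient $a$ on its most negative edge, and let each negative edge $uv$ draw charge only from the endpoint where it ranks lower. With this rule, the charges reaching a fixed positive edge $uw$ come from disjoint sets of vertices $v$ on the two sides. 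That would bring the multiplicity down to at most $n$, giving $n\,N(\phi)\le n\,P(\phi)$ and hence the contradiction.

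If this disjointness fails, the fallback is to split the negative mass at each vertex proportionally to its negative degree. One would then check that the degree condition $2\delta(G)-n\ge n/a$ is used exactly once per vertex, not once per endpoint.
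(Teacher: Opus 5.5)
There is a genuine gap. Your reduction is sound: the Farkas step is exactly Lemma~\ref{lem:dual-criterion}, and everything then rests on showing $P(\phi)\ge N(\phi)$ for every dual-feasible $\phi$. That inequality is never actually proved. Your first count gives only $N(\phi)\le 2P(\phi)$, and the proposed repair does not work.

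\begin{itemize}
\item Letting a negative edge $uv$ draw only from one endpoint breaks your key inequality. The constraint with $h=uv$ controls only the sum $\phi(uw)+\phi(vw)$, and either summand may be nonpositive.
\item Letting only the most negative edge $h$ of each triangle carry the weight $a$ does bound the multiplicity on each positive edge by $n-2$. But in a triangle with negative edges $h,e$ and positive edge $g$, the constraints give only $\phi(g)\ge a|\phi(h)|+|\phi(e)|$. So $e$ is guaranteed $a|\phi(e)|$ only from the $t_1(e)$ triangles in which it is the most negative edge, and only $|\phi(e)|$ from each of the remaining $t_2(e)$ triangles containing it.
\item The guaranteed income of $e$ is therefore $|\phi(e)|\bigl(a\,t_1(e)+t_2(e)\bigr)$. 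When $t_1(e)=0$ this can be as small as $|\phi(e)|\,n/a$. Carried through, your argument yields only $P\ge N/a$, not $P\ge N$.
\end{itemize}

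More fundamentally, no rearrangement of triangle-by-triangle charging can close this gap. The sharp local inequality for dual-feasible $\phi$ is
\[
 \sum_{g\in E(T)\cap E_\phi^+}\phi(g)\ \ge\ \frac{a+1}{2}\sum_{f\in E(T)\cap E_\phi^-}|\phi(f)|
 \qquad\text{for every }T\in\cT(G),
\]
with equality for weights $-1,-1,a+1$, which are dual-feasible. Write $t(e)$ for the number of triangles containing $e$. Summing over $T$ and using $t(g)\le n-2$ and $t(f)\ge 2\delta(G)-n\ge n/a$ gives essentially only $P(\phi)\ge\frac{a+1}{2a}N(\phi)$. This falls short of $P(\phi)\ge N(\phi)$ for every $a>1$. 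The vague proportional-splitting fallback does not address this loss.

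What is missing is a genuinely global discharging estimate. In the paper, the local inequality above is exactly the observation that dividing the positive weights by $\beta=(a+1)/2$ produces a triangle-nonnegative weighting. The paper then invokes the Delcourt--Postle estimate, Lemma~\ref{lem:dp-discharging}. Its transfers depend on the total demand $S(e)$ via \eqref{eq:demand} and \eqref{eq:redistribution}, not on one triangle at a time. This gives $P\ge\frac{2}{a+1}N$ for triangle-nonnegative weightings once $\delta(G)\ge c_a n$ (Lemma~\ref{lem:mass-estimate}), and multiplying back by $\beta$ yields $P(y)\ge N(y)$. The stated theorem then follows from Theorem~\ref{thm:weighted-triangle}, since $\frac12+\frac1{2a}\ge c_a$.

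Your remark that the bound must be tight at $\frac12+\frac1{2a}$ is also off. The sharp threshold is $c_a=\frac12+\frac1{2(a+1)}$, so there is slack at the degree you assume.
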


The main contribution of this section is to improve the minimum-degree bound in
Theorem~\ref{thm:bryant-weighted} to the sharp value as follows. The proof relies on Farkas' lemma
and the discharging estimate of Delcourt and
Postle~\cite{delcourt2026}. Their result concerns
ordinary triangles, whereas we need to consider weighted
triangles.

\begin{theorem}\label{thm:weighted-triangle}
Let $a\ge1$ be a real number and put
\begin{equation}\label{eq:def-ca}
 c_a:=\frac12+\frac{1}{2(a+1)}.
\end{equation}
Every $n$-vertex graph  $G$ with $\delta(G)\ge c_a n$ has a fractional $T_a$-decomposition.
Moreover, the constant $c_a$ is best possible.
\end{theorem}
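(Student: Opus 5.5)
We argue by contradiction via Farkas' lemma (Lemma~\ref{thm:farkas}), applied to the system \eqref{eq:fractional-Ta}. The columns are indexed by $\mathcal{I}(G)$ and the rows by $E(G)$, the column of $(T,h)$ is $w_{T,h}$, and $b=\mathbf 1$. Suppose $G$ has no fractional $T_a$-decomposition. Then there is a signed edge weighting $\phi:E(G)\to\R$ such that
\[
 a\,\phi(h)+\sum_{e\in E(T)\setminus\{h\}}\phi(e)\ge 0\quad\text{for every }(T,h)\in\mathcal I(G),\qquad \sum_{e\in E(G)}\phi(e)<0 .
\]
By \eqref{eq:total-mass}, the second condition says $P(\phi)<N(\phi)$. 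The aim is to contradict this using $\delta(G)\ge c_a n$.

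\textbf{Rescaling step.} Define $\psi(e):=a\phi(e)$ if $\phi(e)>0$, and $\psi(e):=\phi(e)$ otherwise. Then $P(\psi)=aP(\phi)$ and $N(\psi)=N(\phi)$. We claim that $\psi$ is triangle-nonnegative. First, every triangle $T$ has an edge with $\phi>0$. Otherwise all three edges of $T$ satisfy $\phi\le0$, and the dual constraint for any choice of $h$ forces them all to be $0$. (This degenerate case is harmless, since such edges contribute nothing to $P$ or $N$; we may alternatively perturb $\phi$ slightly to make it nonzero and untied.) Now choose $h\in E(T)$ with $\phi(h)>0$. Then
\[
 \sum_{e\in E(T)}\psi(e)\;\ge\; a\phi(h)+\sum_{e\ne h}\phi(e)\;\ge\;0,
\]
because any other positive edge of $T$ only increases when multiplied by $a\ge1$.

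\textbf{Discharging step.} Next we invoke the discharging estimate of Delcourt and Postle~\cite{delcourt2026} for triangle-nonnegative (triangle-covered) weightings. Before doing so, we perturb to a nonzero untied weighting, which is allowed by continuity and strict inequality. The form I expect to use is: if $\delta(G)\ge cn$ with $c\ge 1/2$, then every triangle-nonnegative $\psi$ satisfies
\[
 (1-c)\,N(\psi)\le \left(c-\tfrac12\right)P(\psi).
\]
At $c=3/4$ this reads $N\le P$, which is exactly their fractional Nash-Williams statement. With $c=c_a$ we have $c_a-\tfrac12=\tfrac1{2(a+1)}$ and $1-c_a=\tfrac a{2(a+1)}$. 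Hence the estimate gives
\[
 N(\phi)=N(\psi)\le \tfrac1a P(\psi)=P(\phi),
\]
contradicting $P(\phi)<N(\phi)$. The main obstacle is making sure the Delcourt--Postle discharging bound is available in this parametrised form for a general $c$, rather than only at $3/4$. If it is not, I would re-run their local discharging argument: send positive charge from each positive edge across the triangles through it to the negative edges, and track how the codegree bound $|N(u)\cap N(v)|\ge(2c-1)n$ enters, in order to extract the ratio $(c-\tfrac12)/(1-c)$.

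\textbf{Sharpness.} Let $A\dotcuprel B$ be a balanced partition of $n$ vertices, and take all $A$--$B$ edges together with $D$-regular graphs inside $A$ and inside $B$, where $D=(1-\eta)\frac{n}{2(a+1)}$. Then $\delta(G)=n/2+D\approx (c_a-\eta')n$. Every triangle meets the cut in $0$ or $2$ edges. A weighted copy of $T_a$ covers cross-edge weight at most $a+1$ for each unit of internal weight, which is achieved when the internal edge has weight $1$ and the two cross edges have weights $a,1$. Any fractional decomposition therefore needs
\[
 \frac{n^2}{4}=\#\{\text{cross edges}\}\le (a+1)\cdot\#\{\text{internal edges}\}=(a+1)\frac{nD}{2},
\]
which fails for our choice of $D$. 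So no constant below $c_a$ suffices.
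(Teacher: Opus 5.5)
Your Farkas setup and the sharpness construction are fine; the latter is exactly the paper's cut inequality $e_{\mathrm{cross}}\le(a+1)e_{\mathrm{int}}$. The core of the argument fails, however: the rescaling goes in the wrong direction, and the parametrised discharging bound you hope for is false.

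Your bound $(1-c)N(\psi)\le(c-\tfrac12)P(\psi)$ becomes \emph{stronger} as $c$ decreases, and as $c\downarrow\tfrac12$ it essentially forces $N(\psi)=0$. That cannot be right for a weaker degree hypothesis. Here is a concrete counterexample.
\begin{itemize}
\item Take a balanced cut $(A,B)$ of $n$ vertices, all cross edges, and an $r$-regular graph inside each part with $r\ge(c-\tfrac12)n$.
\item Put $\psi=-1$ on cross edges and $\psi=2$ on internal edges. Every triangle has sum $0$ or $6$, so $\psi$ is triangle-nonnegative.
\item Yet $P(\psi)/N(\psi)=4r/n$, which can be made close to $4c-2$. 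For $c=c_a$ this is about $\tfrac{2}{a+1}$, whereas your inequality demands $P(\psi)\ge aN(\psi)$.
\end{itemize}
Since $\tfrac{2}{a+1}<a$ for every $a>1$, your estimate fails for all $a>1$, and re-running the discharging cannot rescue a false inequality.

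The correct parametrised estimate is $P(\psi)\ge\tfrac{2}{a+1}N(\psi)$. This is what Delcourt--Postle's Lemma~10.4 gives with $\varepsilon=\tfrac34-c_a$, and the example above shows it is sharp. Combined with your upscaling by $a$, it only yields $P(\phi)\ge\tfrac{2}{a(a+1)}N(\phi)$, which is too weak.

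The missing idea is to scale the positive weights \emph{down} by $\beta=\tfrac{a+1}{2}$, rather than up by $a$. Scaling up preserves triangle-nonnegativity trivially, but only by adding slack that a general lemma about triangle-nonnegative weightings cannot exploit. Scaling down is not automatic; to verify it, you must put the weight $a$ on the \emph{negative} edges.
\begin{itemize}
\item If a triangle has exactly one negative edge $e_1$, the constraint with $h=e_1$ gives $\phi(e_2)+\phi(e_3)\ge -a\phi(e_1)$, and $\beta\le a$ suffices.
\item If it has two negative edges $e_1,e_2$, adding the constraints with $h=e_1$ and $h=e_2$ gives $(a+1)\bigl(\phi(e_1)+\phi(e_2)\bigr)+2\phi(e_3)\ge0$. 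This is exactly triangle-nonnegativity once $\phi(e_3)$ is divided by $\beta$.
\end{itemize}
With $\phi'$ the downscaled weighting, $P(\phi)=\beta P(\phi')\ge\beta\cdot\tfrac{2}{a+1}N(\phi')=N(\phi')=N(\phi)$, which is what you need.
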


Fix a real number $a\ge1$ and a graph $G$. We call a signed edge weighting $y:E(G)\to\R$ \emph{dual-feasible} if, for every triangle of $G$ with edges $e_1,e_2,e_3$, we have 
\begin{equation}\label{eq:dual-feasible}
 \begin{aligned}
 ay(e_1)+y(e_2)+y(e_3)&\ge0,\\
 y(e_1)+ay(e_2)+y(e_3)&\ge0,\\
 y(e_1)+y(e_2)+ay(e_3)&\ge0.
 \end{aligned}
\end{equation}
Note that the three inequalities in \eqref{eq:dual-feasible} correspond to the three possible distinguished edges in a copy of $T_a$.

\begin{lemma}\label{lem:dual-criterion}
A graph $G$ has a fractional $T_a$-decomposition if and only if every dual-feasible signed edge weighting $y$ satisfies
\begin{equation}\label{eq:dual-target}
 \sum_{e\in E(G)}y(e)\ge0.
\end{equation}
\end{lemma}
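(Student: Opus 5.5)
This is a direct application of Farkas' lemma (Lemma~\ref{thm:farkas}) to the linear system \eqref{eq:fractional-Ta}. We take $m=|E(G)|$ rows indexed by edges and $n=|\mathcal{I}(G)|$ columns indexed by pairs $(T,h)$. The matrix $A$ has as its $(T,h)$-column the vector $w_{T,h}$ from \eqref{eq:weighted-triangle-vector}, and we take $b=\1$, the all-ones vector in $\R^{E(G)}$. By \eqref{eq:fractional-Ta}, a fractional $T_a$-decomposition of $G$ is exactly a vector $x\in\R_{\ge0}^{\mathcal{I}(G)}$ with $Ax=\1$, which is alternative~1 of Farkas' lemma.

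Next we identify alternative~2. For $y\in\R^{E(G)}$, the $(T,h)$-entry of $A^{\mathsf T}y$ is
\[
\sum_{e}w_{T,h}(e)y(e)=a\,y(h)+\sum_{e\in E(T)\setminus\{h\}}y(e).
\]
As $h$ ranges over the three edges of $T$, these are exactly the three left-hand sides in \eqref{eq:dual-feasible}. So $A^{\mathsf T}y\ge0$ holds precisely when $y$, viewed as a signed edge weighting, is dual-feasible. Also $b^{\mathsf T}y=\sum_{e\in E(G)}y(e)$. Alternative~2 therefore says there is a dual-feasible $y$ with $\sum_e y(e)<0$.

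Since Farkas' lemma states that exactly one alternative holds, $G$ has a fractional $T_a$-decomposition if and only if no dual-feasible $y$ has negative total weight, which is \eqref{eq:dual-target}.

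The only points to check are degenerate cases. If $G$ has an edge lying in no triangle, that row of $A$ is zero. Alternative~1 then fails, and $y=-\1_e$ (value $-1$ on that edge, $0$ elsewhere) is vacuously dual-feasible on every triangle and has negative sum, consistent with the lemma. If $\mathcal{I}(G)$ is empty, Farkas' lemma still applies with $n=0$, or the case can be handled directly as just described. Finally, coefficients of embeddings giving the same weighted copy may be added, so indexing columns by $(T,h)$ rather than by embeddings loses nothing. None of these steps is a real obstacle; the only care needed is matching the column structure to \eqref{eq:dual-feasible}.
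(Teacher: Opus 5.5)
Your proof is correct and matches the paper's argument: you apply Farkas' lemma with $A_{e,(T,h)}=w_{T,h}(e)$ and $b=\1$, identify $A^{\mathsf T}y\ge0$ with dual-feasibility via the three choices of $h$, and read off $b^{\mathsf T}y=\sum_{e}y(e)$. Your remarks on the degenerate cases (an edge in no triangle, or empty $\mathcal{I}(G)$) are correct additions that the paper leaves implicit.
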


\begin{proof}
Let $A$ be the $|E(G)|\times|\mathcal{I}(G)|$ matrix with rows indexed by $e\in E(G)$ and columns indexed by $(T,h)\in\mathcal{I}(G)$.
Set $A_{e,(T,h)}:=w_{T,h}(e)$, where $w_{T,h}$ is defined in \eqref{eq:weighted-triangle-vector}.
For $x\in\R^{\mathcal{I}(G)}$, the coordinate of $Ax$ indexed by $e$ is
\[
 (Ax)_e=\sum_{(T,h)\in\mathcal{I}(G)}x_{T,h}w_{T,h}(e)
 \qquad\text{for every }e\in E(G).
\]
The sum on the right is the total weighted contribution to edge $e$ from the copies with coefficients $x_{T,h}$.
Let $\1\in\R^{E(G)}$ be the all-ones vector.
By \eqref{eq:fractional-Ta}, $G$ has a fractional $T_a$-decomposition if and only if the system
\begin{equation}\label{eq:farkas-system}
 Ax=\1,
 \qquad
 x\in\R_{\ge0}^{\mathcal{I}(G)}
\end{equation}
has a solution.

By Lemma~\ref{thm:farkas} with $b=\1$, the system \eqref{eq:farkas-system} has no solution if and only if there exists a vector $y\in\R^{E(G)}$ with $A^{\mathsf T}y\ge0$ and $\1^{\mathsf T}y<0$.
We write $y(e)$ for the coordinate of $y$ indexed by $e\in E(G)$ and regard $y$ as a signed edge weighting of $G$.
For each $(T,h)\in\mathcal{I}(G)$, we have
\[
 (A^{\mathsf T}y)_{(T,h)}
 =\sum_{e\in E(G)}w_{T,h}(e)y(e)
 =a\,y(h)+\sum_{e\in E(T)\setminus\{h\}}y(e).
\]
This means that for each triangle $T$, the three choices of $h$ give precisely the three expressions on the left-hand sides of \eqref{eq:dual-feasible}. Thus,  $A^{\mathsf T}y\ge0$ is equivalent to $y$ being dual-feasible.
On the other hand, $\1^{\mathsf T}y<0$ implies $\sum_{e\in E(G)}y(e)<0$.

Therefore, 
Lemma~\ref{thm:farkas}  shows that \eqref{eq:farkas-system} has no solution if and only if $G$ admits a dual-feasible weighting with negative total edge weight.
Equivalently, $G$ has a fractional $T_a$-decomposition if and only if every dual-feasible weighting $y$ satisfies \eqref{eq:dual-target}.\qedhere
\end{proof}

We shall use a discharging estimate of Delcourt and Postle~\cite{delcourt2026}.
To state it, we first recall their demand function and linearized charge.
Let $G$ be a $n$-vertex graph, and let $\phi$ be a nonzero untied triangle-nonnegative signed edge weighting of $G$. Following Delcourt and Postle~\cite[Definition~4.11]{delcourt2026}, we define the \emph{demand} $D(e,f)$ of a negative edge $f\in E_\phi^-$ on a positive edge $e\in E_\phi^+$ as follows.
If no triangle contains both edges, set $D(e,f)=0$.
Otherwise, let $g$ be the third edge of their unique common triangle and put
\[
 p:=\phi(e)>0,
 \qquad
 s:=-\phi(f)>0,
 \qquad
 q:=\phi(g).
\]
Since $\phi$ is nonzero and untied, $q\ne0$ and $p\ne q$.
We set
\begin{equation}\label{eq:demand}
 D(e,f):=
 \begin{cases}
 s,&q<0,\\[2pt]
 \dfrac{sp}{p+q},&q>0\text{ and }\max\{p,q\}<s,\\[7pt]
 s,&q>0,\ p>q\text{ and }p\ge s,\\[2pt]
 0,&q>0,\ q>p\text{ and }q\ge s.
 \end{cases}
\end{equation}
For each $e\in E_\phi^+$, write
\begin{equation}\label{eq:total-demand}
 S(e):=\sum_{f\in E_\phi^-}D(e,f)
\end{equation}
for the \emph{total demand} on $e$. We define the \emph{linearized final charge}~\cite[Definitions~4.13--4.14]{delcourt2026} by
\begin{equation}
 Q_0(f):=\phi(f)+\sum_{e\in E_\phi^+}R_0(e,f)
 \qquad\text{for every }f\in E_\phi^-,\label{eq:linearized-final}
\end{equation}
where
\begin{equation}
 R_0(e,f):=\frac{4}{n}D(e,f)
 \left(1-\frac{S(e)}{n\phi(e)}\right)
 \qquad\text{for }e\in E_\phi^+,\ f\in E_\phi^-.\label{eq:linearized-transfer}
\end{equation}
We remark that Delcourt and Postle use the normalized demands $D(e,f)/\phi(e)$, whose sum over $f\in E_\phi^-$ is $S(e)/\phi(e)$.
Their sums are restricted to pairs of edges in a common triangle.
The same restriction leaves \eqref{eq:total-demand} and \eqref{eq:linearized-final} unchanged because $D(e,f)=0$ for every other pair.
In this notation, their estimate is as follows.

\begin{lemma}[Delcourt and Postle~{\cite[Lemma~10.4]{delcourt2026}}]\label{lem:dp-discharging}
Let $\varepsilon\in[0,1/4]$, and let $G$ be a $n$-vertex graph with $\delta(G)\ge(3/4-\varepsilon)n$.
Every nonzero untied triangle-nonnegative signed edge weighting $\phi$ of $G$ satisfies
\[
 Q_0(f)\ge-4\varepsilon|\phi(f)|
 \qquad\text{for every }f\in E_\phi^-.
\]
\end{lemma}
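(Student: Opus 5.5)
This lemma is quoted from \cite[Lemma~10.4]{delcourt2026}, and in the paper we would simply invoke it. If I had to reprove it, I would fix $f=uv\in E_\phi^-$ with $s:=-\phi(f)$ and rewrite $Q_0(f)$ as a sum over the triangles through $f$. Since $\phi$ is untied and nonzero, every positive edge $e$ with $D(e,f)\neq0$ lies in a unique triangle $uvw$ with $f$. Also $|N(u)\cap N(v)|\ge 2\delta(G)-n\ge(1/2-2\varepsilon)n$. So it suffices to show that the transfers $R_0(e,f)$ collected over all triangles through $f$ total at least $(1-4\varepsilon)s$.

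\textbf{Step 1: each triangle supplies total demand $s$.} Let $T=\{f,e,g\}$ be a triangle through $f$, with $p=\phi(e)$ and $q=\phi(g)$. I would check case by case from \eqref{eq:demand} that the demands $f$ places on the positive edges of $T$ add up to exactly $s$.
\begin{itemize}
\item If exactly one of $e,g$ is positive, say $e$, then triangle-nonnegativity forces $p>s$, and $D(e,f)=s$.
\item If both are positive and $\max\{p,q\}<s$, then $\frac{sp}{p+q}+\frac{sq}{p+q}=s$.
\item If both are positive and the larger of $p,q$ is at least $s$, then the larger edge receives $s$ and the other receives $0$.
\end{itemize}
The same cases also give $D(e,f)\le\phi(e)$ always. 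Summing over the common neighbourhood gives $\sum_e D(e,f)\ge s(1/2-2\varepsilon)n$. Without the correction factor, this yields $\frac4n\sum_eD(e,f)\ge(2-8\varepsilon)s$.

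\textbf{Step 2: control the correction term.} It remains to show
\[
\frac{4}{n^2}\sum_{e}D(e,f)\frac{S(e)}{\phi(e)}\le(1+4\varepsilon)s.
\]
Using $D(e',f')\le\phi(e')$, we get $S(e)/\phi(e)\le t(e)$, the number of triangles on $e$ that contain a negative edge. The crude bound $t(e)\le n$ loses a factor of $2$. So I would exploit the minimum degree more carefully:
\begin{itemize}
\item A positive edge $e=xy$ lies in at least $(1/2-2\varepsilon)n$ triangles. Each of these triangles has total weight at least $0$, which limits how much negative weight can be concentrated on $e$.
\item The demands of a single triangle split as in Step 1, so large values of $S(e)/\phi(e)$ can only occur on edges that absorb demand from many negative edges at once.
\end{itemize}
The aim is to pair the loss on $e$ against the surplus $\frac4n\sum_eD(e,f)-s$ coming from the common neighbourhood having size about $n/2$ rather than $n/4$. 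A convexity or AM--GM argument on $x\mapsto x(1-x/n)$ should then show that the worst case is the balanced extremal configuration.

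\textbf{Main obstacle.} Step 2 is the genuine difficulty; it is where Delcourt and Postle spend most of their Section~10. I expect it to require a global averaging over all negative edges rather than a purely local bound at $f$. I would first try to bound $\sum_f\sum_e D(e,f)S(e)/\phi(e)=\sum_e S(e)^2/\phi(e)$ in aggregate, and then localize that bound back to the individual edge $f$.
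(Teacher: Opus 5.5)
Citing \cite[Lemma~10.4]{delcourt2026} is exactly what the paper does; it gives no argument of its own. So your first sentence already matches the paper's treatment. Your Step~1 is also correct. Triangle-nonnegativity rules out triangles through $f$ whose other two edges are both negative, and each triangle $uvw$ hands out demand exactly $s$ to its positive edges. Moreover $D(e,f)\le\phi(e)$, and hence $\sum_eD(e,f)=s\,t(f)$ with $t(f):=|N(u)\cap N(v)|\ge(1/2-2\varepsilon)n$. The bound $S(e)\le t(e)\phi(e)$ needs the per-triangle version of this fact, not just $D\le\phi$: a triangle on $e$ with two negative edges contributes $s'+s''$, which is at most $\phi(e)$ by triangle-nonnegativity.

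As a self-contained proof, however, the sketch has a genuine gap, and Step~2 is set up wrongly. With $x_e:=S(e)/(n\phi(e))$ one has $Q_0(f)=-s+\frac4n\sum_eD(e,f)(1-x_e)$, so the lemma is equivalent to
\[
 \frac4n\sum_eD(e,f)\,x_e\le\frac{4s\,t(f)}{n}-(1-4\varepsilon)s .
\]

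\textbf{The reduction in Step~2.} Replacing $t(f)$ by its minimum gives the constant $1-4\varepsilon$, not $1+4\varepsilon$; your target would only give $Q_0(f)\ge-12\varepsilon s$. More seriously, the decoupled inequality is false, because the correction term grows with $t(f)$. Take $K_n$ with $V=\{u,v\}\cup R_1\cup R_2$ and $|R_1|=|R_2|$. Put $\phi(uv)\approx-2$, $\phi\approx1$ on the other edges at $u$ or $v$, $\phi\approx-2$ between $R_1$ and $R_2$, and $\phi\approx4$ inside each $R_i$, perturbed upwards to be untied. For $f=uv$, every positive edge $e$ at $f$ has $D(e,f)\approx1$ and $x_e\approx1/2$. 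So the correction is about $2s$, while $Q_0(f)\approx s>0$.

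\textbf{No edge-by-edge bound on $x_e$.} Your first bullet hopes for one, but none exists. In $K_n$, let $xy$ have weight $1$, let the other edges at $x$ have weight slightly above $-1$, give small positive weights to the other edges at $y$, and weight $2$ elsewhere. Then $x_{xy}\approx1$ and $D(xy,xz)>0$.

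\textbf{What is missing.} You need an averaging argument over the edges at $f$ that is sharp on the extremal graph: $K_{n/2,n/2}$ plus triangle-free $n/4$-regular graphs inside the parts, with weights $1$ and $-2$. There $x_e=1/2$ and $Q_0(f)=0$. Your sketch does not supply such an argument.

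\textbf{The aggregate plan.} Bounding $\sum_eS(e)^2/\phi(e)$ globally cannot be ``localized'' to a pointwise bound for each $f$. A suitable global bound would, however, suffice for the paper's application. Lemma~\ref{lem:mass-estimate} only uses the summed inequality $\sum_{f\in E_\phi^-}Q_0(f)\ge-4\varepsilon N(\phi)$ together with the pointwise $Q(f)\ge Q_0(f)$.
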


Using this estimate, we obtain the following bound on the positive and negative masses of a triangle-nonnegative weighting.

\begin{lemma}\label{lem:mass-estimate}
Let $a\ge1$ be a real number, and let $G$ be a $n$-vertex graph of with $\delta(G)\ge c_a n$, where $c_a$ is defined in \eqref{eq:def-ca}.
Then every triangle-nonnegative signed edge weighting $\phi$ of $G$ satisfies
\begin{equation}\label{eq:mass-estimate}
 P(\phi)\ge\frac{2}{a+1}N(\phi).
\end{equation}
\end{lemma}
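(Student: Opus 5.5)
The plan is to apply Lemma~\ref{lem:dp-discharging} with the largest admissible defect and then sum the linearized final charges over all negative edges. Put
\[
\varepsilon:=\frac34-c_a=\frac14-\frac{1}{2(a+1)}=\frac{a-1}{4(a+1)}.
\]
Since $a\ge1$, this gives $\varepsilon\in[0,1/4)$, and $\delta(G)\ge c_a n=(3/4-\varepsilon)n$. We have $1-4\varepsilon=\frac{2}{a+1}$, which is exactly the constant in \eqref{eq:mass-estimate}. So it suffices to show $P(\phi)\ge(1-4\varepsilon)N(\phi)$.

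\textbf{Reduction to nonzero untied weightings.} Lemma~\ref{lem:dp-discharging} needs $\phi$ to be nonzero and untied, so we first reduce to that case. Given a triangle-nonnegative $\phi$, choose small positive numbers $\delta_e$ ($e\in E(G)$) so that $\phi+\delta$ is nonzero and has pairwise distinct values. Adding positive amounts keeps every triangle sum nonnegative. The perturbed weighting is therefore again triangle-nonnegative, and $P$ and $N$ change by at most $\sum_e\delta_e$. Once the inequality holds for all such perturbations, letting $\delta\to0$ gives it for $\phi$ by continuity of $P$ and $N$.

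\textbf{Summing the charges.} Now let $\phi$ be nonzero, untied and triangle-nonnegative. Summing \eqref{eq:linearized-final} over $f\in E_\phi^-$ and interchanging sums with \eqref{eq:total-demand} gives
\[
\sum_{f\in E_\phi^-}Q_0(f)=-N(\phi)+\sum_{e\in E_\phi^+}\frac4n\,S(e)\Bigl(1-\frac{S(e)}{n\phi(e)}\Bigr).
\]
For fixed $p=\phi(e)>0$, the real function $t\mapsto t(1-t/(np))$ attains its maximum $np/4$ at $t=np/2$. Hence each summand is at most $\phi(e)$, and the right-hand side is at most $-N(\phi)+P(\phi)$. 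This bound needs no information about the demands $D(e,f)$ themselves: it holds for any value of $S(e)$, even if some $R_0(e,f)$ are negative. On the other hand, Lemma~\ref{lem:dp-discharging} gives $\sum_f Q_0(f)\ge-4\varepsilon\sum_f|\phi(f)|=-4\varepsilon N(\phi)$. Combining the two bounds,
\[
P(\phi)\ge(1-4\varepsilon)N(\phi)=\frac{2}{a+1}N(\phi).
\]

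The main point to check is the choice of $\varepsilon$, namely that $3/4-c_a$ lies in the allowed range $[0,1/4]$ exactly because $a\ge1$. The only other delicate step is the perturbation argument, which is routine. No part of the demand structure has to be analysed, because the quadratic bound $t(1-t/(np))\le np/4$ absorbs everything.
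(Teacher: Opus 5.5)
Your proof is correct and is essentially the paper's argument. The paper introduces an explicit discharging rule $R(e,f)=\phi(e)D(e,f)/S(e)$ and proves $R(e,f)\ge R_0(e,f)$ pointwise; summed over $f$, this is exactly your bound $\frac4n S(e)\bigl(1-\frac{S(e)}{n\phi(e)}\bigr)\le\phi(e)$, since both reduce to $\phi(e)(2x_e-1)^2\ge0$ with $x_e=S(e)/(n\phi(e))$, so you have only skipped the intermediate charges, and your perturbation step matches the paper's choice $\delta_{e_i}=it$.
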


\begin{proof}
Let $G$ be a $n$-vertex graph of with $\delta(G)\ge c_a n$, and let $\phi$ be a triangle-nonnegative signed edge weighting of $G$.  Set 
\[
 \rho:=\frac34-c_a=\frac{a-1}{4(a+1)}.
\]
Using \eqref{eq:total-mass} and $1-4\rho=2/(a+1)$, to prove \eqref{eq:mass-estimate}, it suffices to show 
\begin{equation}\label{eq:charge-target}
 \sum_{e\in E(G)}\phi(e)\ge-4\rho N(\phi).
\end{equation}

Suppose first that $\phi$ is nonzero and untied.
Give each edge $e$ the initial charge $\phi(e)$.
To prove \eqref{eq:charge-target}, we use the classical discharging method by  redistributing $\phi(e)$ from positive edges to negative edges while preserving the total charge. After discharging, the  final charges that are nonnegative on positive edges and at least $-4\rho|\phi(f)|$ on each negative edge $f$.

The discharging rule is defined as follows: for $e\in E_\phi^+$ and $f\in E_\phi^-$, define the transfer from $e$ to $f$ by
\begin{equation}\label{eq:redistribution}
 R(e,f):=
 \begin{cases}
 \displaystyle \phi(e)\frac{D(e,f)}{S(e)},&S(e)>0,\\[7pt]
 0,&S(e)=0.
 \end{cases}
\end{equation}
The final charges are therefore
\begin{align}
 Q(e)&:=\phi(e)-\sum_{f\in E_\phi^-}R(e,f)
 \qquad\text{for every }e\in E_\phi^+,\label{eq:final-positive}\\*
 Q(f)&:=\phi(f)+\sum_{e\in E_\phi^+}R(e,f)
 \qquad\text{for every }f\in E_\phi^-.\label{eq:final-negative}
\end{align}
Summing \eqref{eq:final-positive} and \eqref{eq:final-negative} over $E_\phi^+$ and $E_\phi^-$, respectively, gives that 
\begin{equation}\label{eq:charge-conservation}
 \sum_{h\in E(G)}Q(h)=\sum_{h\in E(G)}\phi(h).
\end{equation}

We first show that every positive edge has nonnegative final charge. For $e\in E_\phi^+$ with $S(e)>0$, substituting \eqref{eq:redistribution} into \eqref{eq:final-positive} gives
\[
 Q(e)=\phi(e)-\frac{\phi(e)}{S(e)}\sum_{f\in E_\phi^-}D(e,f)=0.
\]
If $S(e)=0$, then no charge is transferred from $e$, so $Q(e)=\phi(e)>0$.
Thus
\begin{equation}\label{eq:positive-charge}
 Q(e)\ge0\qquad\text{for every }e\in E_\phi^+.
\end{equation}

We next bound the charges of negative edges.
It follows from $a\ge1$ that  $0\le\rho<1/4$, and then $\delta(G)\ge c_a n=(3/4-\rho)n$.
Applying Lemma~\ref{lem:dp-discharging} to $(G,\phi)$ with $\varepsilon=\rho$ gives
\begin{equation}\label{EQ:Q-0(f)}
  Q_0(f)\ge-4\rho|\phi(f)|
 \qquad\text{for every }f\in E_\phi^-.   
\end{equation}
To obtain the same lower bound for $Q(f)$, we show that 
\begin{equation}\label{EQ:Q(f)>=Q0}
 Q(f)\ge Q_0(f)
 \qquad\text{for every }f\in E_\phi^-.   
\end{equation}
By \eqref{eq:final-negative} and \eqref{eq:linearized-final}, it suffices to prove 
\[
 R(e,f)\ge R_0(e,f)
 \qquad\text{for every }e\in E_\phi^+\text{ and }f\in E_\phi^-.
\]
Fix $e\in E_\phi^+$ and $f\in E_\phi^-$.
If $S(e)=0$, then  $D(e,f)=0$ and so $R(e,f)=R_0(e,f)=0$. We are done. 
Suppose that $S(e)>0$. Set $x_e:=S(e)/(n\phi(e))>0$.
Substituting $x_e$ into \eqref{eq:redistribution} and \eqref{eq:linearized-transfer} gives
\[
 R(e,f)-R_0(e,f)
 =\frac{D(e,f)}{n}\left(\frac{1}{x_e}-4+4x_e\right)
 =\frac{D(e,f)}{n}\frac{(2x_e-1)^2}{x_e}\ge0.
\]

Combining \eqref{EQ:Q-0(f)} and \eqref{EQ:Q(f)>=Q0}, we have 
\begin{equation}\label{eq:negative-charge}
 Q(f)\ge Q_0(f)\ge-4\rho|\phi(f)|
 \qquad\text{for every }f\in E_\phi^-.
\end{equation}
By \eqref{eq:charge-conservation}, \eqref{eq:positive-charge} and \eqref{eq:negative-charge}, we obtain
\[
 \sum_{e\in E(G)}\phi(e)
 =\sum_{h\in E(G)}Q(h)
 =\sum_{e\in E_\phi^+}Q(e)+\sum_{f\in E_\phi^-}Q(f)
 \ge-4\rho\sum_{f\in E_\phi^-}|\phi(f)|
 =-4\rho N(\phi).
\]
This proves \eqref{eq:charge-target}, and hence \eqref{eq:mass-estimate}, when $\phi$ is nonzero and untied.\\

For a general triangle-nonnegative weighting $\phi$, write $E(G)=\{e_1,\ldots,e_m\}$, where $m=|E(G)|$.
For $t>0$, define
\[
 \phi_t(e_i):=\phi(e_i)+i\,t
 \qquad\text{for every }i\in\{1,\ldots,m\}.
\]
For a triangle $T$ of $G$, we have 
\[
 \sum_{e_i\in E(T)}\phi_t(e_i)
 =\sum_{e_i\in E(T)}\phi(e_i)
 +t\sum_{e_i\in E(T)}i
 \ge0.
\]
This means $\phi_t$ remains triangle-nonnegative for any $t>0$.
A zero weight can occur only at $t=-\phi(e_i)/i$, and equal weights on distinct edges $e_i,e_j$ can occur only at $t=(\phi(e_j)-\phi(e_i))/(i-j)$.
These conditions exclude only finitely many values of $t$.
Choose a sequence of positive numbers $t_k\to0$ as $k\to \infty$ avoiding those values.
Each $\phi_{t_k}$ is then nonzero, untied and triangle-nonnegative. The above argument shows 
\begin{equation}\label{eq:perturbed-mass}
 P(\phi_{t_k})\ge\frac{2}{a+1}N(\phi_{t_k})
 \qquad\text{for every }k\ge1.
\end{equation}
Since $P$ and $N$ are finite sums of continuous functions, taking limits in \eqref{eq:perturbed-mass} yields \eqref{eq:mass-estimate}.
\end{proof}

We now prove Theorem~\ref{thm:weighted-triangle}.

\begin{proof}[Proof of Theorem~\ref{thm:weighted-triangle}]
Let $a\ge1$, and let $G$ be a $n$-vertex graph  with $\delta(G)\ge c_a n$.
We first show that $G$ has a fractional $T_a$-decomposition.
By Lemma~\ref{lem:dual-criterion}, it suffices to show that $\sum_{e\in E(G)}y(e)\ge0$
for every dual-feasible weighting $y$ of $G$.
By \eqref{eq:total-mass}, this is equivalent to
\[
P(y)\ge N(y).
\]

Fix a dual-feasible weighting $y$ of $G$.
We first rescale the nonnegative weights of $y$ and show that the resulting weighting $\phi$ is triangle-nonnegative.
Set
\begin{equation}\label{eq:rescaling-definition}
 \beta:=\frac{a+1}{2},
 \qquad
 \phi(e):=
 \begin{cases}
 y(e)/\beta,&y(e)\ge0,\\
 y(e),&y(e)<0.
 \end{cases}
\end{equation}
Then $1\le\beta\le a$, and 
\begin{equation}\label{eq:rescaled-conclusions}
 P(y)=\beta P(\phi),
 \qquad
 N(y)=N(\phi).
\end{equation}

We claim that $\sum_{e\in E(T)}\phi(e)\ge0$ for each triangle $T$ of $G$. Indeed, let $T$ be a triangle of $G$ and label its edges so that $y(e_1)\le y(e_2)\le y(e_3)$. 
By \eqref{eq:dual-feasible}, we have $y(e_3)\ge0$.
If $y(e_1)\ge0$, then $\phi(e)=y(e)/\beta\ge0$ for each $e\in E(T)$, so $\sum_{e\in E(T)}\phi(e)\ge0$. Suppose next that $y(e_1)<0\le y(e_2)$.
The first inequality in \eqref{eq:dual-feasible} gives $y(e_2)+y(e_3)\ge-ay(e_1)$.
It follows from  $\beta\le a$ that 
\[
 \sum_{e\in E(T)}\phi(e)
 =y(e_1)+\frac{y(e_2)+y(e_3)}{\beta}
 \ge\left(1-\frac{a}{\beta}\right)y(e_1)
 \ge0.
\]
At last, suppose that $y(e_1)\le y(e_2)<0\le y(e_3)$.
Adding the first two inequalities in \eqref{eq:dual-feasible} and dividing by $2\beta=a+1$, we obtain
\[
 \sum_{e\in E(T)}\phi(e)
 =y(e_1)+y(e_2)+\frac{y(e_3)}{\beta}\ge0.
\]

The above argument shows that $\phi$ is triangle-nonnegative. Applying Lemma~\ref{lem:mass-estimate} to $\phi$ and using \eqref{eq:rescaled-conclusions} with $\beta=(a+1)/2$, we obtain
\[
 P(y)=\beta P(\phi)
 \ge\beta\frac{2}{a+1}N(\phi)
 =N(\phi)=N(y).
\]
Thus, Lemma~\ref{lem:dual-criterion} gives a fractional $T_a$-decomposition of $G$.  \\

To prove that $c_a$ is best possible, we first derive a cut inequality that every graph with a fractional $T_a$-decomposition must satisfy. Suppose that a graph $H$ has a fractional $T_a$-decomposition with coefficients $x_{T,h}\ge0$ for each $(T,h)\in\mathcal{I}(H)$.
Fix a cut $(A,B)$ of $H$. 
Let $E_{\mathrm{int}}$ denote the set of internal edges of $(A,B)$, and let $e_{\mathrm{cross}}$ and $e_{\mathrm{int}}$ denote the numbers of cross edges and internal edges of $(A,B)$, respectively. 
Write $\sum_{T,h}$ for $\sum_{(T,h)\in\mathcal{I}(H)}$. Since every triangle contains an internal edge
and all edge weights of $T_a$ are at least $1$,
we have $\sum_{e\in E_{\mathrm{int}}}w_{T,h}(e)\ge1$
for every $(T,h)\in\mathcal I(G)$. Thus, summing \eqref{eq:fractional-Ta} over $E_{\mathrm{int}}$ and using $x_{T,h}\ge0$ gives
\begin{equation}\label{EQ:e-int}
  e_{\mathrm{int}}
 =\sum_{e\in E_{\mathrm{int}}}1
 =\sum_{e\in E_{\mathrm{int}}}\sum_{T,h}x_{T,h}w_{T,h}(e)
 =\sum_{T,h}x_{T,h}
   \sum_{e\in E_{\mathrm{int}}}w_{T,h}(e)
 \ge\sum_{T,h}x_{T,h}.   
\end{equation}
On the other hand, recall that each copy of $T_a$  has total edge weight $a+2$.  Summing \eqref{eq:fractional-Ta} over $E(H)$ gives
\begin{equation}\label{e-cross-e-int}
  e_{\mathrm{cross}}+e_{\mathrm{int}}
 =\sum_{e\in E(H)}1
 =\sum_{e\in E(H)}\sum_{T,h}x_{T,h}w_{T,h}(e)
 =\sum_{T,h}x_{T,h}
   \sum_{e\in E(H)}w_{T,h}(e)
 =(a+2)\sum_{T,h}x_{T,h}.   
\end{equation}
Combining \eqref{EQ:e-int} and \eqref{e-cross-e-int}, we obtain 
\begin{equation}\label{eq:weighted-cut-global}
 e_{\mathrm{cross}}\le(a+1)e_{\mathrm{int}}.
\end{equation}

We now construct graphs with no fractional $T_a$-decomposition and minimum-degree ratios tending to $c_a$.
Let $m$ be an even integer, and let $r_m$ be the largest nonnegative even integer strictly smaller than $m/(a+1)$.
For all sufficiently large $m$,
\begin{equation}\label{eq:rm-bounds}
 \frac{m}{a+1}-2\le r_m<\frac{m}{a+1}.
\end{equation}
Let $G_m$ be the graph on two disjoint sets $A$ and $B$ of size $m$ obtained by adding all cross edges of $(A, B)$ and placing an $r_m$-regular graph inside each part.
Then we have 
\begin{equation}\label{eq:weighted-construction-data}
 v(G_m)=2m,
 \qquad
 \delta(G_m)=m+r_m,
 \qquad
 e_{\mathrm{cross}}=m^2,
 \qquad
 e_{\mathrm{int}}=mr_m.
\end{equation}
By \eqref{eq:def-ca}, \eqref{eq:rm-bounds} and \eqref{eq:weighted-construction-data}, the minimum-degree ratios of $G_m$ satisfy
\[
 \frac{\delta(G_m)}{v(G_m)}
 =\frac12+\frac{r_m}{2m}<c_a,
 \qquad
 \lim_{m\to\infty}\frac{\delta(G_m)}{v(G_m)}=c_a.
\]
Moreover, by \eqref{eq:rm-bounds} and \eqref{eq:weighted-construction-data}, we have
\[
e_{\mathrm{cross}}=m^2>(a+1)mr_m=(a+1)e_{\mathrm{int}},
\]
which implies that $G_m$ has no fractional $T_a$-decomposition by \eqref{eq:weighted-cut-global}. This completes the proof of Theorem \ref{thm:weighted-triangle}.

\end{proof}

\section{Proof of Theorem~\ref{thm:odd-cycle-threshold}}\label{sec:odd-cycles}
In this section, we prove Theorem~\ref{thm:odd-cycle-threshold}. 
For a positive integer $k$, a \emph{proper $k$-colouring} of a graph $F$ is a map $\kappa:V(F)\to[k]$ assigning different colours to the endpoints of every edge.
For $i\in[k]$, let $V_i:=\kappa^{-1}(i)$ be the \emph{colour class} of colour $i$.
The \emph{condensation} $W$ associated with $\kappa$ has vertex set $[k]$, with vertex $i$ representing the colour class $V_i$.
Distinct vertices $i$ and $j$ are adjacent in $W$ if and only if $F$ has an edge between $V_i$ and $V_j$.
Each edge $ij$ of $W$ has weight equal to the number of edges of $F$ between $V_i$ and $V_j$ \cite[Section~1]{bryant2026}.

Let $F$ be a fixed graph and let $\eta>0$.
An \emph{$\eta$-approximate $F$-decomposition} of an $n$-vertex graph $G$ is a collection of edge-disjoint copies of $F$ leaving at most $\eta n^2$ edges of $G$ uncovered.
Define the \emph{$\eta$-approximate decomposition threshold} $\delta_F^\eta$ to be the infimum of all real numbers $d\ge 0$ such that every sufficiently large graph $G$ with
$\delta(G)\ge d\,v(G)$ has an $\eta$-approximate $F$-decomposition.
The definition of $\delta_F^\eta$ imposes no divisibility condition.

The following theorem of Bryant et al. \cite{bryant2026} shows that a fractional $W$-decomposition yields an approximate $F$-decomposition whenever $W$ is a condensation of $F$.

\begin{theorem}[Bryant et~al.\ {\cite[Theorem~1.3]{bryant2026}}]\label{thm:condensation}
Let $F$ be a fixed graph and $W$ a condensation of $F$.
For every $\eta>0$, there exists an integer $n_0=n_0(F,\eta)$ such that every graph $G$ of order $n\ge n_0$ with a fractional $W$-decomposition has an $\eta$-approximate $F$-decomposition.
\end{theorem}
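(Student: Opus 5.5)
The statement is Theorem~1.3 of Bryant et al.~\cite{bryant2026}, and here it is used as a black box. If I had to prove it, I would follow the standard route from fractional to approximate decompositions, in the spirit of Haxell and R\"odl~\cite{haxell2001}: regularity, then transfer of the fractional solution to the reduced graph, then a nibble inside a bounded number of regular tuples. The single new point compared with ordinary fractional $F$-decompositions is that the weights of $W$ are what make the auxiliary packing hypergraph approximately regular.

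\textbf{Step 1 (regularity and transfer).} Apply Szemer\'edi's regularity lemma to $G$ with parameters $\eps\ll\eta$. This gives clusters $X_1,\ldots,X_t$ and a reduced graph $R$ in which each $\eps$-regular pair $(X_p,X_q)$ of density $d_{pq}\ge\sqrt\eps$ is kept. The fractional $W$-decomposition $x=(x_f)$ of $G$ is then pushed down to the clusters by averaging. Each embedding $f$ of $W$ into $G$ whose image meets $k$ distinct clusters is recorded as an embedding of $W$ into $R$ (after discarding the $O(\eps n^2)$ edge-mass lying in irregular pairs, sparse pairs, or inside clusters). This yields nonnegative coefficients $z_g$ on embeddings $g$ of $W$ into $R$ with
\[
\sum_{g:\,pq\in g(E(W))} z_g\,w_W\bigl(g^{-1}(pq)\bigr)\;\le\; d_{pq}|X_p||X_q|,
\]
and the deficit summed over all pairs is at most $\eta n^2/3$.

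\textbf{Step 2 (slicing).} For each regular pair $(X_p,X_q)$, randomly partition its edges into subgraphs, one for each $g$ using $pq$. The slice belonging to $g$ should have density proportional to $z_g\,w_W(g^{-1}(pq))$. Since $W$ has boundedly many vertices and $t$ is bounded, there are only boundedly many slices. Random slicing keeps each slice $\eps'$-regular with the prescribed density, up to negligible error.

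\textbf{Step 3 (packing in one tuple).} Fix $g$, and for $i\in[k]$ let $Y_i$ be the cluster $g(i)$. In the slice for $g$, the pair $(Y_i,Y_j)$ is $\eps'$-regular with density $\lambda_g w_{ij}$, where $\lambda_g$ is a common scalar. We want to pack copies of $F$ whose colour class $V_i$ is mapped into $Y_i$. Form the auxiliary hypergraph $\mathcal H$ whose vertices are the slice edges and whose hyperedges are such copies of $F$. By the counting lemma, every slice edge between $Y_i$ and $Y_j$ lies in approximately
\[
w_{ij}\cdot\frac{N}{\lambda_g w_{ij}|Y_i||Y_j|}=\frac{N}{\lambda_g|Y_i||Y_j|}
\]
copies, where $N$ is the total number of copies. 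This quantity does not depend on $ij$, because the cluster sizes are equal. So $\mathcal H$ is approximately regular, and its codegrees are of smaller order. The Pippenger--Spencer theorem (or the R\"odl nibble) therefore gives a matching in $\mathcal H$ covering all but an $o(1)$-fraction of the slice edges. A small technicality is edges that are atypical for the counting lemma; these are few and can be discarded before applying the nibble.

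\textbf{Step 4 (assembling).} Take the union of the matchings over all $g$. The copies are edge-disjoint because the slices are disjoint. The uncovered edges are the irregular, sparse and internal edges, the transfer loss from Step 1, and the nibble leftovers. With $\eps$ chosen small relative to $\eta$ and $n\ge n_0(F,\eta)$, their total is at most $\eta n^2$.

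I expect the main obstacle to be Step~1. The fractional decomposition lives on embeddings into $G$, and many embeddings place two colour classes in the same cluster or use irregular pairs. I would first try to show that the total $x$-mass of such embeddings is $O(\eps n^2)$. Embeddings that meet few clusters carry weight on only $O(n^2/t)$ edges, and the $x$-weight on any edge is at most $1$, which should bound this mass. The remaining averaging should then go through by linearity.
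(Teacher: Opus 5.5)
This statement is quoted from Bryant et al.\ and used as a black box; the paper gives no proof, so your outline has to stand on its own. Your strategy is the standard one, and the key computation in Step~3 is right: the slice on $(Y_i,Y_j)$ has density proportional to $w_{ij}$ and the clusters have equal size, so the degree of an edge in $\mathcal H$ does not depend on $ij$.

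\textbf{Main gap (Steps~2--3).} The real difficulty is not Step~1 but the densities of the slices. The coefficients $z_g$ come from an arbitrary fractional decomposition, so a slice can have density $\lambda_g w_{ij}$ far below $\eps'$, even $o(1)$ as $n\to\infty$. For such a slice, $\eps'$-regularity is vacuous and the counting lemma gives no degree estimates, so you cannot verify the hypotheses of Pippenger--Spencer. Nor can you cheaply discard all $g$ with $z_g<\gamma(n/t)^2$. There can be up to $t^{v(W)}$ such $g$, with $t\le M(\eps)$, so the discarded mass can be of order $\gamma e(F)\,t^{v(W)-2}n^2$. This forces $\gamma$ to be tiny in terms of $\eps$. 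But applying the counting lemma to the surviving slices forces $\eps$ to be tiny in terms of $\gamma$. That is circular, and ``boundedly many slices'' hides exactly this dependence on $\eps$.

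One repair is to replace $z$ by an optimal vertex solution of the reduced linear program: maximise $\sum_g z_g$ subject to $\sum_g z_g\,w_W(g^{-1}(pq))\le e_G(X_p,X_q)$ for all $pq\in E(R)$. Such a solution has at most $e(R)\le t^2$ positive coordinates, and your pushed-down $z$ shows its value is still large. Deleting every $g$ with $z_g<\gamma(n/t)^2$ then loses at most $\gamma e(F)n^2$, independently of $t$. So you may take $\eps\ll\gamma\ll\eta$, and every surviving slice has density at least about $\gamma$.

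Alternatively, stop treating slices as regular pairs. You could count copies of $F$ in the dense tuple and use concentration over the random edge assignment, with $n_0$ allowed to depend on $\gamma$. Or you could skip slicing entirely. Place on each dense tuple the uniform fractional $F$-packing scaled by $z_g$; each typical edge of $(X_p,X_q)$ then receives weight about $z_g w_W(g^{-1}(pq))/e_G(X_p,X_q)$. Pass to an integral packing via Haxell--R\"odl.

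\textbf{Smaller point (Step~1).} Your bound on the discarded $x$-mass uses the fact that an embedding putting two vertices into one cluster must use an edge inside that cluster. This holds only when the two vertices are adjacent in $W$. The statement allows an arbitrary condensation, and $W$ need not be complete. An embedding can send two non-adjacent vertices of $W$ into the same cluster without using any internal edge, and the total $x$-mass of such embeddings need not be small. The fix is to record these as homomorphisms $g\colon W\to R$ that are injective on each edge of $W$. In Step~3, place the colour classes $V_i,V_j$ with $g(i)=g(j)$ in that common cluster. The degree computation survives, because the slice density on $pq$ becomes proportional to $\sum_{ij\in E(W):\,g(i)g(j)=pq}w_{ij}$. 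For the triangle $T_{\ell-2,1,1}$ used in this paper the issue does not arise, but it does for the general statement.
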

For odd cycles, the following theorem of Barber et al. \cite{barber2016} gives exact decompositions with an arbitrarily small increase in the minimum-degree bound for approximate decompositions.

\begin{theorem}[Barber et~al.\ {\cite[Theorem~1.4]{barber2016}}]\label{thm:absorption}
Fix an odd integer $\ell\ge3$.
For every $\gamma>0$, there exist $\eta>0$ and an integer $n_0$ such that every $C_\ell$-divisible graph $G$ of order $n\ge n_0$ with
\[
 \delta(G)\ge\bigl(\delta_{C_\ell}^\eta+\gamma\bigr)n
\]
has a $C_\ell$-decomposition.
\end{theorem}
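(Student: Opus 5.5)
This is a result quoted from Barber, K\"uhn, Lo and Osthus~\cite{barber2016}, so rather than reprove it I would derive it from the general output of their iterative absorption method, specialised to odd cycles. Their main general theorem says the following for any fixed graph $F$. The decomposition threshold is at most the maximum of three quantities: the approximate threshold $\delta_F^{0+}:=\lim_{\eta\to0}\delta_F^\eta$, a ``cover-down'' (vortex) threshold, and an ``absorber'' threshold. For cycles, the last two are shown in~\cite{barber2016} to be at most $1/2$.

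The plan is therefore: fix $\gamma>0$ and write $d:=\delta_{C_\ell}^\eta+\gamma$. The first step is to observe that $d\ge 1/2+\gamma$ for every small $\eta>0$. Indeed, $\delta_{C_\ell}^\eta\ge1/2$, because a complete balanced bipartite graph has minimum degree $n/2$ and contains no odd cycle at all. Consequently the hypothesis $\delta(G)\ge dn$ already dominates the thresholds needed for the absorbing and covering steps. Since $\eta\mapsto\delta^\eta_{C_\ell}$ is monotone, I would choose $\eta$ small in terms of $\gamma$ and $\ell$, as dictated by the constants of the vortex.

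The argument itself would run through the standard steps of iterative absorption. First, choose a vortex $V(G)=U_0\supseteq U_1\supseteq\dots\supseteq U_k$ by random selection, with $|U_{i+1}|=\mu|U_i|$ and $|U_k|$ bounded. Each $G[U_i]$ then inherits minimum degree ratio at least $d-\gamma/4$, and every vertex has relative degree at least about $d$ into each $U_{i+1}$. Second, before anything else, set aside a collection of absorbers, one for each possible $C_\ell$-divisible leftover graph $H$ on $U_k$. Each absorber is an edge-disjoint subgraph $A_H$ such that both $A_H$ and $A_H\cup H$ have $C_\ell$-decompositions. The existence of absorbers uses transformers built from cycles and only needs minimum degree above $1/2$ together with the $C_\ell$-divisibility of $H$. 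Third, iterate the following for $i=0,\dots,k-1$. Apply an $\eta$-approximate decomposition in $G[U_i]$ minus a sparse reserve graph, which is possible by the definition of $\delta^\eta_{C_\ell}$ and the degree bound. Then apply the cover-down lemma, which uses the reserve and odd cycles passing mostly through $U_{i+1}$ to cover every leftover edge not inside $U_{i+1}$. Finally, the divisible remainder inside $U_k$ is absorbed by its designated absorber.

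I expect the cover-down step to be the main obstacle. The cycles used there must cover every edge at a vertex $v\in U_i\setminus U_{i+1}$ using edges in $U_{i+1}$. They must also keep degrees divisible by $2$ (as $\gcd(C_\ell)=2$) and preserve $e(C_\ell)\mid e$, and they must do all this with only degree ratio slightly above $1/2$. My first attempt would be to reduce it to a fractional/approximate problem on the link neighbourhoods of $v$, pairing up its leftover edges $vx,vy$ and closing each pair with an $(\ell-2)$-path inside $U_{i+1}$. Such paths exist robustly because two neighbourhoods of size above $|U_{i+1}|/2$ intersect, and $\ell-2\ge1$ leaves room to route. The divisibility bookkeeping is handled by the parity of leftover degrees, which is guaranteed by $C_\ell$-divisibility.
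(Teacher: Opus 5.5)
The paper does not prove this statement; it quotes it from \cite{barber2016}. Deriving it from the iterative-absorption machinery of that paper is therefore the right idea. For odd $\ell\ge5$ your outline is a fair summary: vortex, absorbers reserved first, approximate decomposition of $G[U_i]$ minus a sparse reserve, cover-down by closing pairs of edges at $v$ with paths of length $\ell-2\ge3$ through $U_{i+1}$, and absorption on $U_k$.

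The gap is the case $\ell=3$. The statement includes it, and the paper uses it: the proof of Theorem~\ref{thm:odd-cycle-threshold} applies it with $a=\ell-2=1$. Here your claim that the absorbing and covering steps need only minimum degree above $1/2$ is false, and your justification breaks at exactly this point. The closing path has length $\ell-2=1$, so covering $vx,vy$ requires $xy\in E(G)$. In other words, the edges at $v$ must be paired by a matching in the link of $v$. The fact that two neighbourhoods of size above $|U_{i+1}|/2$ meet only gives $x$--$y$ paths of length $2$.

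Degree $(1/2+\gamma)n$ does not give such matchings. Let $N(v)=S\cup T$ with $|S|=2\gamma n$ and $|T|\approx(1/2-\gamma)n$. Make $T$ independent and complete both to $S$ and to a further set $R$ of size about $(1/2-\gamma)n$, and make $G[S\cup R]$ complete. Then $\delta(G)\approx(1/2+\gamma)n$, but every edge of $G[N(v)]$ meets $S$. So edge-disjoint triangles cover at most $2|S|=4\gamma n$ of the $(1/2+\gamma)n$ edges at $v$ (for $\gamma<1/6$). No covering step can clear $v$, however the vortex is chosen.

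What saves $\ell=3$ is that the hypothesis is much stronger than $1/2+\gamma$, but your bound $\delta^\eta_{C_\ell}\ge1/2$ from $K_{n/2,n/2}$ does not capture this. Use the space barrier instead. Take two sets of size $m$, complete to each other, each spanning a $(1-\varepsilon)m/2$-regular graph; this has minimum degree $(3/4-\varepsilon/4)n$. Every triangle has at most two cross edges and at least one internal edge, so every triangle packing misses at least $\varepsilon m^2$ cross edges. Hence $\delta^\eta_{K_3}\ge 3/4-\eta$, and so $d\ge 3/4+\gamma/2$ once $\eta\le\gamma/2$.

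At this degree every link $G[N(v)]$ has minimum degree at least $|N(v)|+\delta(G)-n\ge |N(v)|/2+n/8$. This Dirac-type condition is inherited by $N(v)\cap U_{i+1}$. For triangles the cover-down has to go through (fractional) perfect matchings in links, which is why degree $3/4$ appears; compare the bound $1-1/(\chi(F)+1)$ of \cite{glock2019} for $\chi(F)=3$. With this separate treatment of $\ell=3$, or a direct citation of the triangle result, your derivation goes through.

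A smaller omission: an $\eta$-approximate decomposition bounds only the total number of uncovered edges. Your pairing step tacitly assumes every vertex has few leftover edges compared with $|U_{i+1}|$, and this has to be arranged separately.
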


Combining Theorem~\ref{thm:weighted-triangle} with the two results above, we prove Theorem~\ref{thm:odd-cycle-threshold}.

\begin{proof}[Proof of Theorem~\ref{thm:odd-cycle-threshold}]
Fix an odd integer $\ell\ge3$, and put $c_\ell:=\ell/(2\ell-2)$. By \eqref{eq:odd-cycle-lower-intro}, it suffices to show $ \delta_{C_\ell}\le c_\ell$. Together with Theorem~\ref{thm:absorption}, we only need to prove 
\begin{equation}\label{eq:approximate-threshold}
 \delta_{C_\ell}^\eta\le c_\ell
 \qquad\text{for every }\eta>0.
\end{equation}
Fix $\eta>0$.
We prove \eqref{eq:approximate-threshold} by showing that for sufficiently large $m$, every $m$-vertex graph $H$ with $\delta(H)\ge c_\ell m$ has an $\eta$-approximate $C_\ell$-decomposition.

Take $W:=T_{\ell-2,1,1}$. We claim that $W$ is a condensation of $C_\ell$. Indeed, let $v_1,\ldots,v_\ell$ be the vertices of $C_\ell$ in cyclic order.
We define a proper $3$-colouring by giving $v_\ell$ colour $3$ and alternating colours $1$ and $2$ along the path $v_1\cdots v_{\ell-1}$, starting with colour $1$ at $v_1$.
Let $e_{ij}$ denote the number of edges between distinct colour classes $i$ and $j$. Then 
\[
 e_{12}=\ell-2,
 \qquad
 e_{13}=1,
 \qquad
 e_{23}=1.
\]
Hence $W$ is a condensation of $C_\ell$.

Setting $a:=\ell-2\ge1$ in \eqref{eq:def-ca} gives
\[
 c_a
 =\frac12+\frac{1}{2(a+1)}
 =\frac12+\frac{1}{2(\ell-1)}
 =c_\ell.
\]
Recall that $a\ge1$, $W=T_{a,1,1}$ and $\delta(H)\ge c_\ell m=c_a m$. Theorem~\ref{thm:weighted-triangle} gives a fractional $W$-decomposition of $H$.  By Theorem~\ref{thm:condensation}, $H$ has an $\eta$-approximate $C_\ell$-decomposition, and then $\delta_{C_\ell}^\eta\le c_\ell$.
Since $\eta>0$ was arbitrary, this proves \eqref{eq:approximate-threshold} and hence Theorem~\ref{thm:odd-cycle-threshold}. 
\end{proof}

\section{Concluding remarks}

The main ingredient of our proof is a sharp fractional
decomposition theorem for weighted triangles (Theorem \ref{thm:weighted-triangle}).
Although we use the discharging estimate of Delcourt
and Postle~\cite{delcourt2026}, their result applies
to triangle-nonnegative edge weightings, whereas
weighted-triangle decompositions lead to different
dual constraints. Our contribution is a rescaling
of the positive edge weights that preserves the
negative weights and converts these constraints
into triangle nonnegativity. 

Theorem \ref{thm:weighted-triangle} holds for every real $a\ge 1$, not only for the values
$a=\ell-2$ arising from odd-cycle decompositions. Thus, the result is of independent interest
beyond its application to odd cycles. It is therefore
natural to ask whether the sharp minimum-degree
threshold can also be determined for fractional
decompositions into triangles with arbitrary positive
edge weights $(a,b,c)$.

\section*{Acknowledgments}
J.H.\ was supported by the National Key R\&D Program of China (No.\ 2023YFA1010202).

\section*{Declaration on the use of generative AI}
During the preparation of this manuscript, the authors used  generative artificial intelligence. 
The principal ideas and proof outline had already been developed by the authors before the tool was used. 
The tool assisted with a targeted literature search, most notably by helping to locate the quantitative discharging estimate of Delcourt and Postle  \cite[Lemma~10.4]{delcourt2026}, restated here as Lemma~\ref{lem:dp-discharging}, and provided assistance with manuscript preparation.  The authors reviewed and edited all AI-assisted material and take full responsibility for the content of the manuscript.
\setlength{\bibsep}{4pt}
\bibliographystyle{plain}
\bibliography{references}

@article{barber2017,
  author  = {Barber, B. and K{\"u}hn, D. and Lo, A. and Montgomery, R. and Osthus, D.},
  title   = {Fractional clique decompositions of dense graphs and hypergraphs},
  journal = {J. Combin. Theory Ser. B},
  volume  = {127},
  pages   = {148--186},
  year    = {2017}
}

@article{barber2016,
  author  = {Barber, B. and K{\"u}hn, D. and Lo, A. and Osthus, D.},
  title   = {Edge-decompositions of graphs with high minimum degree},
  journal = {Adv. Math.},
  volume  = {288},
  pages   = {337--385},
  year    = {2016}
}

@article{bryant2026,
  author  = {Bryant, D. and Dukes, P. and Horsley, D. and Maenhaut, B. and Montgomery, R.},
  title   = {On decomposition thresholds for odd-length cycles and other tripartite graphs},
  journal = {J. Combin. Theory Ser. B},
  volume  = {180},
  pages   = {94--110},
  year    = {2026}
}

@article{bryant2014,
  author  = {Bryant, D. and Horsley, D. and Pettersson, W.},
  title   = {Cycle decompositions {V}: complete graphs into cycles of arbitrary lengths},
  journal = {Proc. Lond. Math. Soc. (3)},
  volume  = {108},
  number  = {5},
  pages   = {1153--1192},
  year    = {2014}
}

@article{delcourt2021,
  author  = {Delcourt, M. and Postle, L.},
  title   = {Progress towards {Nash-Williams'} conjecture on triangle decompositions},
  journal = {J. Combin. Theory Ser. B},
  volume  = {146},
  pages   = {382--416},
  year    = {2021}
}

@unpublished{delcourt2026,
  author        = {Delcourt, M. and Postle, L.},
  title         = {A proof of {Nash-Williams'} conjecture},
  note          = {Preprint, arXiv:\allowbreak 2606.11178v1},
  eprint        = {2606.11178v1},
  archivePrefix = {arXiv},
  year          = {2026}
}

@article{glock2019,
  author  = {Glock, S. and K{\"u}hn, D. and Lo, A. and Montgomery, R. and Osthus, D.},
  title   = {On the decomposition threshold of a given graph},
  journal = {J. Combin. Theory Ser. B},
  volume  = {139},
  pages   = {47--127},
  year    = {2019}
}

@article{glock2023,
  author  = {Glock, S. and K{\"u}hn, D. and Lo, A. and Osthus, D.},
  title   = {The existence of designs via iterative absorption: hypergraph {$F$}-designs for arbitrary {$F$}},
  journal = {Mem. Amer. Math. Soc.},
  volume  = {284},
  number  = {1406},
  year    = {2023}
}

@unpublished{granet2026,
  author        = {Granet, B. and Horsley, D.},
  title         = {Determining decomposition thresholds for long odd cycles},
  note          = {Preprint, arXiv:\allowbreak 2606.21548v1},
  eprint        = {2606.21548v1},
  archivePrefix = {arXiv},
  year          = {2026}
}

@article{haxell2001,
  author  = {Haxell, P. E. and R{\"o}dl, V.},
  title   = {Integer and fractional packings in dense graphs},
  journal = {Combinatorica},
  volume  = {21},
  number  = {1},
  pages   = {13--38},
  year    = {2001}
}

@article{joos2022,
  author  = {Joos, F. and K{\"u}hn, M.},
  title   = {Fractional cycle decompositions in hypergraphs},
  journal = {Random Structures Algorithms},
  volume  = {61},
  number  = {3},
  pages   = {425--443},
  year    = {2022}
}

@article{keevashdesigns,
  author  = {Keevash, P.},
  title   = {The existence of designs},
  journal = {Ann. of Math. (2)},
  year    = {to appear}
}

@article{montgomery2019,
  author  = {Montgomery, R.},
  title   = {Fractional clique decompositions of dense graphs},
  journal = {Random Structures Algorithms},
  volume  = {54},
  number  = {4},
  pages   = {779--796},
  year    = {2019}
}

@incollection{nashwilliams1970,
  author    = {Nash-Williams, C. {\relax St}. J. A.},
  title     = {An unsolved problem concerning decomposition of graphs into triangles},
  booktitle = {Combinatorial Theory and its Applications {III}},
  editor    = {Erd\H{o}s, P. and R\'enyi, A. and S\'os, V. T.},
  publisher = {North-Holland},
  address   = {Amsterdam},
  pages     = {1179--1183},
  year      = {1970}
}

@book{schrijver1986,
  author    = {Schrijver, A.},
  title     = {Theory of Linear and Integer Programming},
  publisher = {Wiley},
  address   = {Chichester},
  year      = {1986}
}

@article{taylor2019,
  author  = {Taylor, A.},
  title   = {On the exact decomposition threshold for even cycles},
  journal = {J. Graph Theory},
  volume  = {90},
  number  = {3},
  pages   = {231--266},
  year    = {2019}
}

@article{Wilson1975,
  author  = {Wilson, R. M.},
  title   = {Decompositions of complete graphs into subgraphs isomorphic to a given graph},
  journal = {Congr. Numer.},
  volume  = {15},
  pages   = {647--659},
  year    = {1975}
}

\end{document}